\newcounter{minutes}\setcounter{minutes}{\time}
\newcounter{hours}\setcounter{hours}{\time}
\title
{Some remarks on the visual angle metric}
\author{Parisa Hariri}
\address{Department of Mathematics and Statistics\\
  University of Turku\\ Turku, Finland}
\email{parisa.hariri@utu.fi}
\author{Matti Vuorinen}
\address{Department of Mathematics and Statistics\\
  University of Turku\\ Turku, Finland}
\email{vuorinen@utu.fi}
\author{Gendi Wang}
\address{Department of Physics and Mathematics\\
  University of Eastern Finland\\ Joensuu, Finland}
\email{wgdwang@126.com}
\keywords{triangular ratio metric, visual angle metric, quasiconformal maps }\subjclass[2010]{51M10(30C20)}
\dedicatory{}
\theoremstyle{plain}
\newtheorem{thm}[equation]{Theorem}
\newtheorem{lem}[equation]{Lemma}
\newtheorem{example}[equation]{Example}
\newtheorem{prop}[equation]{Proposition}
\theoremstyle{definition}
\theoremstyle{remark}
\newtheorem{rem}[equation]{Remark}
\newtheorem{conj}[equation]{Conjecture}
\newtheorem{nonsec}[equation]{}
\newtheorem{algor}[equation]{Algorithm}
\numberwithin{equation}{section}
\newcommand{\diam}{\ensuremath{\textrm{diam}}}
\newcommand{\beq}{\begin{equation}}
\newcommand{\eeq}{\end{equation}}
\newcommand{\bequu}{\begin{eqnarray*}}
\newcommand{\eequu}{\end{eqnarray*}}
\newcommand{\bequ}{\begin{eqnarray}}
\newcommand{\eequ}{\end{eqnarray}}
\newcommand{\B}{\mathbb{B}^2}
\newcommand{\R}{\mathbb{R}^2}
\newcommand{\UH}{\mathbb{H}^2}
\newcommand{\BB}{\mathbb{B}^2}
\newcommand{\Hn}{ {\mathbb{H}^n} }
\newcommand{\Bn}{ {\mathbb{B}^n} }
\newcommand{\Rn}{ {\mathbb{R}^n} }
\newcommand{\sh}{\,\textnormal{sh}}
\newcommand{\ch}{\,\textnormal{ch}}
\renewcommand{\th}{\,\textnormal{th}}
\renewcommand{\Im}{{ \rm Im}\,}
\renewcommand{\Re}{{ \rm Re}\,}
\begin{document}

\def\thefootnote{}
\footnotetext{ \texttt{File:~\jobname .tex,
           printed: \number\year-\number\month-\number\day,
           \thehours.\ifnum\theminutes<10{0}\fi\theminutes}
} \makeatletter\def\thefootnote{\@arabic\c@footnote}\makeatother

\begin{abstract}
We show that the visual angle metric and the triangular ratio metric are comparable in convex domains. We also find the extremal points for the visual angle metric in the half space and in the ball by use of a construction based on hyperbolic geometry. Furthermore, we study distortion properties of quasiconformal maps with respect to the triangular ratio metric and the visual angle metric.
\end{abstract}

\maketitle


\section{Introduction}\label{section 1}


\setcounter{equation}{0}
Geometric function theory studies classes of mappings between subdomains of the Euclidean space $\mathbb{R}^n,\, n\geq 2$. These classes include both injective and noninjective mappings. In particular, Lipschitz, quasiconformal, and quasiregular mappings along with their generalizations such as maps with integrable dilatation are in the focus. On the other hand, this theory has also been extended to Banach spaces and even to metric spaces. What is common to these theories is that various metrics are extensively used as powerful tools, e.g., V\"{a}is\"{a}l\"{a}'s theory of quasiconformality in Banach spaces \cite{v2} is based on the study of metrics: the norm metric, the quasihyperbolic metric and the distance ratio metric. In recent years several authors have studied the geometries defined by these and other related metrics \cite{h,himps,kl,mv,rt}. For a survey of these topics the reader is referred to \cite{vu2}.

The main purpose of this paper is to continue the study of some of these metrics.
For a domain $G \subsetneq \mathbb{R}^n$ and $x, y\in G$, {\it the visual angle metric} is defined by
\begin{equation}\label{v}
v_G(x,y)=\sup\{\measuredangle (x,z,y): z\in\partial{G} \}\in [0,\pi]\,,
\end{equation}
where $\partial{G}$ is not a proper subset of a line. This metric was introduced and studied very recently in \cite{klvw}.
It is clear that a point $z\in\partial{G}$ exists for which this supremum is attained, such a point $z$ is called {\it an extremal point for $v_G(x,y)$}.
For a domain $G \subsetneq \mathbb{R}^n$ and $x,y \in G$, {\it the triangular ratio metric} is defined by
\begin{equation}\label{sm}
s_G(x,y)=\sup_{z\in \partial G}\frac{|x-y|}{|x-z|+|z-y|}\in [0,1].
\end{equation}
Again, the existence of an extremal boundary point is obvious. This metric has been studied in \cite{chkv,hklv}.
The above two metrics are
closely related, for instance, both depend on  extremal boundary points. It is easy to see that both metrics are monotone with respect to domain. Thus if $D$ and $G$ are domains in $\Rn$ and $D\subset G$ then for all $x, y\in D$, we have
\[
s_D(x,y)\geq s_G(x,y), \quad v_D(x,y)\geq v_G(x,y).
\]
On the other hand, we will see that these two metrics are not comparable in some domains.

The paper is organized into sections as below. In section $2$ some comparisons between the visual angle metric and the triangular ratio metric in convex domains are given. In section $3$ we find the extremal points for the visual angle metric in the half space and in the ball by use of a construction based on hyperbolic geometry.  Our main results are given in section $4$, where uniform continuity of quasiconformal maps with respect to the triangular ratio metric and the visual angle metric is studied.


\section{Notation and preliminaries}\label{section 2}

In this section, we compare the visual angle metric and the triangular ratio metric in the convex domains and we also show that these two metrics are not comparable in $\BB\setminus\{0\}$.

Given two points $x$ and $y$ in $\Rn$, the segment between them is denoted by
$$
[x,y]=\{(1-t)x+ty\;:\; 0\le t\le1\}\,.
$$

Given three distinct points $x\,,y\,,z \in \Rn$, the notation $\measuredangle(x,z,y)$
means the angle in the range $[0,\pi]$ between the
segments $[x,z]$ and $[y,z]$.

For a domain $G$ of $\overline{\Rn}$, let $\mbox{M\"ob}(G)$ be the group of all M\"obius transformations which map $G$ onto itself.

\begin{nonsec}{\bf Hyperbolic metric.}
The hyperbolic metric $\rho_{\mathbb{H}^n}$ and $\rho_{\mathbb{B}^n}$ of the upper
half space ${\mathbb{H}^n} = \{ (x_1,\ldots,x_n)\in {\mathbb{R}^n}:  x_n>0 \} $
and of the unit ball ${\mathbb{B}^n}= \{ z\in {\mathbb{R}^n}: |z|<1 \} $ can be defined as weighted metrics with the weight functions
 $w_{\mathbb{H}^n}(x)=1/{x_n}$ and
 $w_{\mathbb{B}^n}(x)=2/(1-|x|^2)\,,$ respectively. This definition as such
is rather abstract and for applications explicit formulas
are needed. By \cite[p.35]{b} we have
\begin{equation}\label{cro}
\ch{\rho_{\mathbb{H}^n}(x,y)}=1+\frac{|x-y|^2}{2x_ny_n}\,,
\end{equation}
for all $x,y\in \mathbb{H}^n$, and by \cite[p.40]{b} we have
\begin{equation}\label{sro}
\sh{\frac{\rho_{\mathbb{B}^n}(x,y)}{2}}=\frac{|x-y|}{\sqrt{(1-|x|^2)(1-|y|^2)}}\,,
\end{equation}
and 
\begin{equation}\label{thrho}
\th{\frac{\rho_{\mathbb{B}^n}(x,y)}{2}}=\frac{|x-y|}{\sqrt{|x-y|^2+(1-|x|^2)(1-|y|^2)}}\,,
\end{equation}
for all $x,y\in \Bn$.
\end{nonsec}

Hyperbolic geodesic lines are arcs of circles which are orthogonal to the boundary of the domain.

\begin{nonsec}{\bf Distance ratio metric.}
For a proper open subset $G$ of $\Rn$ and for all
$x,y\in G$, the  distance ratio
metric $j_G$ is defined as
\begin{eqnarray*}
 j_G(x,y)=\log \left( 1+\frac{|x-y|}{\min \{d(x,\partial G),d(y, \partial G) \} } \right)\,.
\end{eqnarray*}
The distance ratio metric was introduced by Gehring and Palka
\cite{gp} and in the above simplified form by  Vuorinen \cite{vu1}. Both definitions are
frequently used in the study of hyperbolic type metrics \cite{himps}
and geometric theory of functions.
\end{nonsec}

By \cite[Theorem 3.8]{klvw} and \cite[Lemma 7.56]{avv},
\begin{equation}\label{vrhoj}
v_{\mathbb{B}^n}(x,y)\le\rho_{\mathbb{B}^n}(x,y)\le 2j_{\mathbb{B}^n}(x,y)\,,\,{\rm for\,\, all}\,\, x\,,y\in\Bn.
\end{equation}


The triangular ratio metric and the hyperbolic metric satisfy the following inequality in the unit ball \cite[Lemma 3.4, Lemma 3.8]{chkv} and \cite[Theorem 3.22]{chkv}:
\begin{equation}\label{srhochkv}
\frac{1}{2} \th{\frac{\rho_{\mathbb{B}^n}(x,y)}{2}}\le s_{\mathbb{B}^n}(x,y)\leq \th{\frac{\rho_{\mathbb{B}^n}(x,y)}{2}}\,,\, {\rm for\,\, all}\,\, x\,,y\in\Bn.
\end{equation}

One of the main results in \cite{klvw} is the following relation between the visual angle metric and the hyperbolic metric:
for all $x\,,y\in \Bn$,
\begin{equation}\label{gen1}
\arctan\left({\rm sh}\frac{\rho_\Bn(x,y)}{2}\right)\leq v_\Bn(x,y)\leq 2\arctan\left({\rm sh}\frac{\rho_\Bn(x,y)}{2}\right)\,,
\end{equation}
see \cite[Theorem 3.11]{klvw}.

If $0\,,x$ and $y$ are collinear or one of the two points $x$ and $y$ is $0$, then by \cite[Lemma 3.10]{klvw} and \eqref{sro},
\begin{eqnarray}\label{gen}
v_{\Bn}(x,y)=\arctan{\left(\frac{|x-y|}{\sqrt{(1-|x|^2)(1-|y|^2)}}\right)}.
\end{eqnarray}
If $x\,,y\in\Hn$ are located on a line orthogonal to $\partial\Hn$, then by \cite[Lemma 3.18]{klvw} and \eqref{cro}
\begin{eqnarray}\label{gen2}
v_{\Hn}(x,y)=\arctan{\left(\frac{|x-y|}{2\sqrt{x_ny_n}}\right)}.
\end{eqnarray}

By the monotonicity of the function $x\mapsto (\arctan x)/x$, it is easy to see that
\begin{equation}\label{ineq:th}
\frac{\pi}4 x\le\arctan x\le x\,,\,\,\forall\, x\in(0,1)\,.
\end{equation}

Our next goal is to prove Theorem \ref{2.16}. Our original argument gave the result with the weaker constant $8$ in place of $\pi$, for the unit ball. The present version is based on the suggestions of the referee, who also suggested the following two Lemmas. We would like to point out that \eqref{01} was already proved in our manuscript \cite{hvz}, which was written shortly after the first version of the present paper.

\begin{lem}\label{2.12}
Let $D\subsetneq \Rn$ be a domain. Then for $x, y\in D$
\beq\label{01}
\sin{\frac{v_D(x,y)}{2}}\leq s_D(x,y),
\eeq
\beq\label{02}
v_D(x,y)\leq \pi s_D(x,y).
\eeq

\end{lem}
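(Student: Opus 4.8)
The plan is to reduce both inequalities to an elementary estimate inside a single triangle, exploiting that the supremum defining $v_D$ is attained. For \eqref{01}, I would first choose an extremal point $z_0\in\partial D$ with $\measuredangle(x,z_0,y)=v_D(x,y)=:\theta$, and set $a=|z_0-y|$, $b=|x-z_0|$, $c=|x-y|$, so that $\theta$ is the angle opposite the side of length $c$. The law of cosines gives $c^2=a^2+b^2-2ab\cos\theta$, and the half-angle substitution $\cos\theta=1-2\sin^2(\theta/2)$ turns this into
\[
c^2=(a-b)^2+4ab\sin^2\frac{\theta}{2}.
\]

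The key step is the algebraic identity
\[
c^2-(a+b)^2\sin^2\frac{\theta}{2}=(a-b)^2\cos^2\frac{\theta}{2}\ge 0,
\]
which follows by substituting the previous display and simplifying with $4ab-(a+b)^2=-(a-b)^2$. This yields $(a+b)\sin(\theta/2)\le c$, hence
\[
\sin\frac{v_D(x,y)}{2}=\sin\frac{\theta}{2}\le\frac{c}{a+b}=\frac{|x-y|}{|x-z_0|+|z_0-y|}\le s_D(x,y),
\]
the final inequality holding because $s_D(x,y)$ is a supremum over all boundary points and $z_0$ is one admissible choice. This establishes \eqref{01}.

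For \eqref{02}, I would combine \eqref{01} with Jordan's inequality $\sin t\ge \tfrac{2}{\pi}t$, valid for $t\in[0,\pi/2]$ (a consequence of the concavity of $\sin$ on $[0,\pi/2]$, whose graph lies above its chord joining $0$ to $\pi/2$). Since $v_D(x,y)\in[0,\pi]$, we may take $t=v_D(x,y)/2\in[0,\pi/2]$ to obtain $\tfrac{1}{\pi}v_D(x,y)\le\sin\tfrac{v_D(x,y)}{2}\le s_D(x,y)$, and multiplying through by $\pi$ finishes \eqref{02}.

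The computation is entirely elementary, so there is no genuine obstacle; the only thing to watch is the reduction to a single triangle. The point worth emphasizing is that one does \emph{not} need the extremal point for $s_D$: evaluating the triangular ratio at the visual-angle extremizer $z_0$ already bounds $\sin(v_D/2)$ from above, and passing to the supremum only weakens the right-hand side in the favorable direction. Degenerate configurations ($x=y$, or $x,y,z_0$ collinear) make both sides vanish or reduce to trivial cases, and can be dispatched separately.
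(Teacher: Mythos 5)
Your proof is correct, but it runs in the opposite direction from the paper's. You start from the \emph{visual-angle} extremizer $z_0$ and prove, via the law of cosines and the identity $c^2-(a+b)^2\sin^2(\theta/2)=(a-b)^2\cos^2(\theta/2)\ge 0$, that $\sin(v_D/2)$ is bounded by the triangular ratio evaluated at $z_0$, hence by the supremum $s_D$. The paper instead starts from the \emph{triangular-ratio} value: writing $s_D(x,y)=\sin(\theta/2)$, it observes that the ellipsoid $E=\{z:|z-x|+|z-y|<|x-y|/\sin(\theta/2)\}$ is contained in $D$, and then invokes domain monotonicity of $v$ together with the fact that $v_E(x,y)=\theta$ to get $v_D\le\theta$. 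Your route buys self-containedness: it needs neither the monotonicity of $v$ under domain inclusion nor the (geometrically evident but not entirely free) fact that the maximal visual angle over an ellipse with foci $x,y$ is attained at the minor-axis endpoints and equals $\theta$ --- your triangle identity is essentially the proof of that fact anyway. The paper's route buys brevity and displays the ellipsoid-containment technique, which is reused in Lemma 2.15 for convex domains. For inequality \eqref{02} the two arguments coincide in substance: both are Jordan's inequality $\sin t\ge\frac{2}{\pi}t$ on $[0,\pi/2]$; you apply it to $t=v_D/2$ after \eqref{01}, the paper applies it in the form $\theta/\sin(\theta/2)\le\pi$. One small remark: your argument uses the existence of an extremal boundary point for $v_D$, which the paper asserts in the introduction (for a finite boundary point this follows since the angle tends to $0$ as $|z|\to\infty$), so this is a legitimate ingredient rather than a gap.
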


\begin{proof}
Fix $\theta\in (0,\pi)$ such that
\beq\label{1l}
s_D(x,y)=\sin{ \frac{\theta}{2}}.
\eeq
Then the ellipsoid 
\[
E=\{z\in \Rn: |z-x|+|z-y|<|x-y|/\sin{\frac{\theta}{2}}\}
\]
is contained in $D$. Hence by domain monotonicity 
\beq\label{2l}
v_D(x,y)\leq v_E(x,y)=\theta,
\eeq
and by \eqref{1l} and \eqref{2l} we see that $\sin{\frac{v_D(x,y)}{2}}\leq s_D(x,y).$ Moreover, 
\[
\frac{v_D(x,y)}{s_D(x,y)}\leq \frac{\theta}{\sin(\theta/2)}\leq \pi
\]
where the last inequality follows from Jordan's inequality.
\end{proof}

\begin{figure}[ht]\label{lem2.12}
\begin{center}
     \includegraphics[width=11cm]{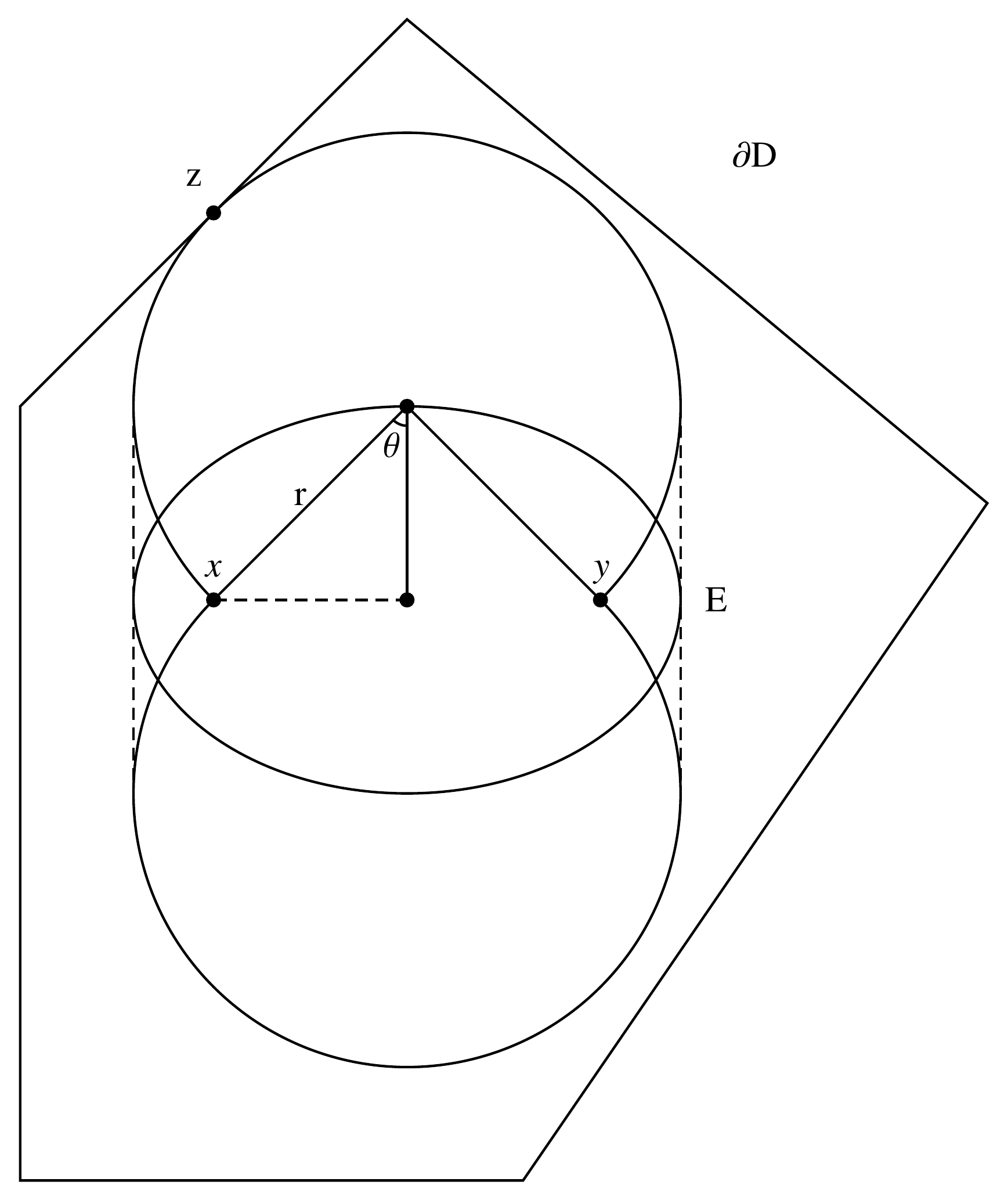}
\caption{Proof of Lemma \ref{2.15}, $v_D(x,y)=\theta=\measuredangle (x,z,y)$.}
\end{center}
\end{figure}

\begin{lem}\label{2.15}
If D is a convex subdomain of $\Rn$, and $x, y\in D$, then 
\[
s_D(x,y)\leq v_D(x,y).
\]
\end{lem}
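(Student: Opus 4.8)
The plan is to reduce the inequality to the half-space $\Hn$, where both $s$ and $v$ can be tied to the hyperbolic metric, and to carry out the actual comparison there.

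First I would localize $s_D$ at its extremal boundary point. Choose $w\in\partial D$ with $s_D(x,y)=\frac{|x-y|}{|x-w|+|w-y|}$. Since $D$ is convex there is a supporting hyperplane $T$ of $D$ at $w$, and $D$ lies in one of the half-spaces $H$ bounded by $T$. Domain monotonicity gives $s_D(x,y)\ge s_H(x,y)$ and $v_H(x,y)\le v_D(x,y)$. On the other hand $w\in T=\partial H$, so the infimum defining $s_H$ is at most $|x-w|+|w-y|$, whence $s_H(x,y)\ge\frac{|x-y|}{|x-w|+|w-y|}=s_D(x,y)$. Therefore $s_D(x,y)=s_H(x,y)$, and after a Euclidean similarity taking $H$ to $\Hn$ it suffices to prove $s_{\Hn}(x,y)\le v_{\Hn}(x,y)$.

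The second step is the exact identity $s_{\Hn}(x,y)=\th\frac{\rho_{\Hn}(x,y)}{2}$. Reflecting $y$ in $\partial\Hn$ to $y^{*}$, every $z\in\partial\Hn$ satisfies $|z-y|=|z-y^{*}|$, so the shortest broken path from $x$ to $y$ meeting $\partial\Hn$ has length $|x-y^{*}|$ and $s_{\Hn}(x,y)=|x-y|/|x-y^{*}|$. Since $|x-y^{*}|^{2}=|x-y|^{2}+4x_{n}y_{n}$, formula \eqref{cro} yields $\th^{2}\frac{\rho_{\Hn}}{2}=\frac{\ch\rho_{\Hn}-1}{\ch\rho_{\Hn}+1}=\frac{|x-y|^{2}}{|x-y|^{2}+4x_{n}y_{n}}$, which is exactly $s_{\Hn}(x,y)^{2}$.

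For the visual angle I would invoke the half-space case of the lower bound of \cite{klvw} (the analogue for $\Hn$ of \eqref{gen1}), namely $v_{\Hn}(x,y)\ge\arctan\left(\sh\frac{\rho_{\Hn}(x,y)}{2}\right)$. It then remains to check the elementary inequality $\th t\le\arctan(\sh t)$ for $t\ge 0$: the difference vanishes at $t=0$ and has derivative $\frac{\ch t}{1+\sh^{2}t}-\frac{1}{\ch^{2}t}=\frac{\ch t-1}{\ch^{2}t}\ge 0$. Taking $t=\rho_{\Hn}/2$ and chaining the four facts gives
\[
s_D(x,y)=s_{\Hn}(x,y)=\th\frac{\rho_{\Hn}(x,y)}{2}\le\arctan\left(\sh\frac{\rho_{\Hn}(x,y)}{2}\right)\le v_{\Hn}(x,y)\le v_D(x,y).
\]

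The main obstacle is the first step: plain monotonicity only gives $s_D\ge s_H$, and the point of the argument is to upgrade this to the \emph{equality} $s_D=s_H$ by using that the $s$-extremal point $w$ sits on the supporting hyperplane. Once that reduction and the half-space identity $s_{\Hn}=\th(\rho_{\Hn}/2)$ are in hand, the rest is a quoted bound and an elementary calculus inequality, and in particular no analysis of the extremal configuration of the visual angle is required. (A more self-contained but technically heavier alternative avoids \cite{klvw} by computing $v_{\Hn}$ directly from the circle through $x,y$ tangent to $\partial\Hn$ that realizes the maximal angle; the delicate point there is identifying which of the two tangent circles is the maximizer, which is precisely what the hyperbolic-metric route sidesteps.)
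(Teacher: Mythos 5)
Your proof is correct, but it follows a genuinely different route from the paper's. The paper works at the extremal point $z$ of the \emph{visual angle} metric: assuming $v_D(x,y)<\pi/2$, it uses convexity to show that the convex hull of the lens $\{w:\measuredangle(x,w,y)>v_D(x,y)\}$ lies in $D$, hence so does the ellipsoid $E=\{w:|w-x|+|w-y|<2r\}$, where $r$ is the circumradius of $x,y,z$; then $s_D(x,y)\le s_E(x,y)=|x-y|/(2r)=\sin v_D(x,y)\le v_D(x,y)$. You instead work at the extremal point $w$ of the \emph{triangular ratio} metric, exploit convexity through a supporting hyperplane to get the exact equality $s_D(x,y)=s_H(x,y)$ for the corresponding half-space $H$ (a nice, reusable reduction: monotonicity alone gives the useless direction, and it is the presence of $w$ on $\partial H$ that upgrades it), then use the reflection identity $s_{\Hn}=\th\frac{\rho_{\Hn}}{2}$ and the klvw lower bound $v_{\Hn}\ge\arctan\bigl(\sh\frac{\rho_{\Hn}}{2}\bigr)$. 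Two remarks. First, your quoted input is not stated in this paper --- \eqref{gen1} is given only for $\Bn$ --- but the half-space version is indeed Theorem 1.1 of \cite{klvw}, so the citation is legitimate; it does, however, make your proof depend on an external theorem where the paper's argument is self-contained elementary geometry. Second, the two proofs bottom out in the same fact: since $\sin\bigl(\arctan(\sh t)\bigr)=\th t$, your calculus inequality $\th t\le\arctan(\sh t)$ is precisely Jordan's inequality $\sin\theta\le\theta$ applied to $\theta=\arctan(\sh t)$, which is the paper's final step $\sin v_D\le v_D$ in disguise; both arguments in fact yield the sharper intermediate bound $s_D(x,y)\le\sin v_D(x,y)$ claimed implicitly by the paper. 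What your approach buys is that no analysis of the $v$-extremal configuration (which tangent circle realizes the angle) is needed; what the paper's buys is independence from \cite{klvw} and a more transparent geometric picture.
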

\begin{proof}
If $v_D(x,y)\geq \frac{\pi}{2}$, then $s_D(x,y)\leq 1\leq v_D(x,y)$. Assume that $v_D(x,y)< \frac{\pi}{2}$. Let $z\in\partial{D}$ be such that $v_D(x,y)=\measuredangle (x,z,y)$. 
Denote by $r$ the radius of the circle passing through $x, y, z$. Since $D$ is convex, the convex hull of the set $\{z\in\Rn\, :\, \measuredangle (x,z,y)> v_D(x,y) \}$ is contained in $D$ and hence also the ellipsoid
\[
E=\{z\in \Rn: |z-x|+|z-y|<2r\}
\]
is contained in the domain $D$, see Figure 1. We easily conclude that $\sin{v_D(x,y)}=\frac{|x-y|}{2r}$. Moreover, by the domain monotonicity property of the $s$- metric, 
\[
s_D(x,y)\leq s_E(x,y)=\frac{|x-y|}{2r}=\sin{v_D(x,y)}\leq v_D(x,y).
\]
\end{proof}

\begin{thm}\label{2.16}
Let $D\subsetneq \Rn$ be a convex domain. Then for all $x,y\in D$ we have 
\[
s_D(x,y)\leq v_D(x,y)\leq \pi s_D(x,y).
\]
\end{thm}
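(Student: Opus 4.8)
The plan is to observe that both inequalities in the statement have already been secured by the two preceding lemmas, so that Theorem \ref{2.16} is essentially just their concatenation. The lower bound $s_D(x,y)\le v_D(x,y)$ is exactly the conclusion of Lemma \ref{2.15}, whose only hypothesis—that $D$ be convex—is precisely what we are assuming here. The upper bound $v_D(x,y)\le \pi s_D(x,y)$ is inequality \eqref{02} of Lemma \ref{2.12}; crucially, that lemma is stated for an arbitrary domain $D\subsetneq\Rn$, so the upper bound holds with no appeal to convexity at all.

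Concretely, I would fix $x,y\in D$ and simply chain the two cited results: Lemma \ref{2.15} furnishes $s_D(x,y)\le v_D(x,y)$, while inequality \eqref{02} of Lemma \ref{2.12} furnishes $v_D(x,y)\le \pi s_D(x,y)$, and combining these gives $s_D(x,y)\le v_D(x,y)\le \pi s_D(x,y)$, which is the claim. It is worth flagging in the write-up that convexity enters only through the left-hand inequality; the right-hand inequality is a property of all domains.

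There is no real obstacle remaining at this stage, since the substantive work was done in the lemmas: the inscribed-ellipsoid constructions of Lemma \ref{2.12} and Lemma \ref{2.15}, where the domain monotonicity of $s_D$ and $v_D$ is combined with the explicit values $s_E(x,y)=\sin(v_D(x,y))$ and $v_E(x,y)=\theta$ on an ellipsoid $E\subset D$, together with Jordan's inequality $\theta/\sin(\theta/2)\le\pi$ that upgrades $\sin(v_D(x,y)/2)\le s_D(x,y)$ to the sharper linear estimate. Once those are invoked, the theorem follows immediately with no further computation.
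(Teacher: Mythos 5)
Your proof is correct and coincides with the paper's own argument: Theorem \ref{2.16} is proved there precisely by concatenating Lemma \ref{2.15} (the lower bound, which is where convexity is used) with inequality \eqref{02} of Lemma \ref{2.12} (the upper bound, valid for every domain $D\subsetneq\Rn$). Your observation that convexity enters only through the left-hand inequality is accurate and consistent with how the lemmas are stated.
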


\begin{proof}
The proof follows from Lemmas \ref{2.12} and \ref{2.15}.
\end{proof}

\begin{rem} The visual angle metric and the triangular ratio metric both highly depend on the boundary of the domain. If we replace the convex domain $D$ in Theorem \ref{2.16} with $G=\BB\setminus\{0\}$, then the visual angle metric and the triangular ratio metric are not comparable in $G$.
To this end, we consider two sequences of points $x_k=(1/k,0),\, y_k=(1/{k^2},0)\, (k=2,3,\cdots)$. Then
$$s_G(x_k,y_k)=\frac{k-1}{k+1}\,.$$
By \eqref{gen}, we get
\bequu
v_G(x_k,y_k)&=&v_{\BB}(x_k,y_k)\\
&=& \arctan\left(\frac{k}{(k+1)\sqrt{1+k^2}}\right) < \arctan\left(\frac{1}{k+1}\right).
\eequu
Therefore,
$$\frac{s_G(x_k,y_k)}{v_G(x_k,y_k)}\geq \frac{k-1}{(k+1)\arctan{\left(\frac{1}{k+1}\right)}}\rightarrow \infty\,,\,\,{\rm as}\,\,k\rightarrow\infty\,.$$

\end{rem}

\medskip

\section{The extremal points for the visual angle metric}\label{section 3}


In this section we aim to find the extremal points for the visual angle metric in the half space and in the ball by use of a construction based on hyperbolic geometry. Since the visual angle metric is similarity invariant we can consider it in the upper half space and in the unit ball.

\begin{thm}
Given two distinct points $x\,,y\in\Hn$, let $J[x,y]$ be the hyperbolic segment joining $x$ and $y$. Let $L_{xy}$ be the hyperbolic bisector of $J[x,y]$ with two endpoints $u$ and $w$ in $\partial\Hn$. Then one of the endpoints $u$ and $w$ is the extremal point for $v_{\Hn}(x,y)$, specifically,\\
(i) if one of $u$ and $w$ is infinity, say $w=\infty$, then $v_{\Hn}(x,y)=\measuredangle(x,u,y)$;\\
(ii) if none of $u$ and $w$ is infinity, then $v_{\Hn}(x,y)=\max\{\measuredangle(x,u,y), \measuredangle(x,w,y)\}$.
\end{thm}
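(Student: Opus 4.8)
The plan is to reduce the problem to the planar case $\Hn$ with $n=2$ and then to identify the two candidate boundary points with the points where a circle through $x$ and $y$ is tangent to $\partial\Hn$. First I would reduce the dimension. Let $P$ be the $2$-plane containing $x$, $y$ and parallel to the direction $e_n$ normal to $\partial\Hn$, and set $\ell=P\cap\partial\Hn$. The reflection $\sigma$ in $P$ is a Euclidean isometry fixing $x$ and $y$, preserving both $\Hn$ and $\partial\Hn$, so the map $z\mapsto\measuredangle(x,z,y)$ is invariant under $\sigma$. Writing $\theta_0=v_{\Hn}(x,y)$, attained at some $z_0\in\partial\Hn$, I would use that the superlevel set $S=\{z\in\Rn:\measuredangle(x,z,y)\ge\theta_0\}$ is convex: in a plane through the line $xy$ it is the intersection of two congruent disks whose boundary circles pass through $x$ and $y$ (a lens), and $S$ is the solid of revolution of this lens about the line $xy$, whose radius profile is a concave circular arc. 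Since $z_0$ and $\sigma z_0$ both lie in $\partial S$ and in $\partial\Hn$, their midpoint $m$ lies in $S\cap\ell$, whence $\measuredangle(x,m,y)\ge\theta_0$; as $m\in\partial\Hn$ this forces equality, so a maximizer can be taken on $\ell$. Combined with the similarity invariance of $v_{\Hn}$, this reduces everything to $\Hn$ with $n=2$.

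In the plane I would solve the classical maximal viewing angle problem: for $z$ ranging over the line $\partial\Hn=\mathbb{R}$, with $x,y$ on one side, $\measuredangle(x,z,y)$ is maximized exactly where a circle through $x,y$ is tangent to $\mathbb{R}$. Indeed, a circle tangent to $\mathbb{R}$ at $u$ lies in the closed upper half plane, so every other boundary point is strictly outside the corresponding disk and hence sees $[x,y]$ under a strictly smaller inscribed angle, making each tangency point a local maximizer. Parametrizing circles through $x,y$ by the position of their centre on the Euclidean perpendicular bisector of $[x,y]$, the tangency condition becomes a quadratic with at most two roots, giving the two candidate points $u,w$, one of which escapes to $\infty$ exactly when $[x,y]$ is horizontal, i.e. $x_n=y_n$. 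The extremal value is then the larger of $\measuredangle(x,u,y)$ and $\measuredangle(x,w,y)$.

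It remains to recognize $u,w$ as the endpoints of the hyperbolic bisector. Here I would compute directly from \eqref{cro}: the bisector $\{p\in\Hn:\rho_{\Hn}(p,x)=\rho_{\Hn}(p,y)\}$ is, after cancelling the common factor $p_n$ and using the monotonicity of $\cosh$, the set $\{p:|p-x|^2/x_n=|p-y|^2/y_n\}$, so its endpoints $u=(t,0)$ on $\partial\Hn$ satisfy $|u-x|^2/x_n=|u-y|^2/y_n$. On the other hand, the circle through $x$ tangent to $\mathbb{R}$ at $u$ has radius $|u-x|^2/(2x_n)$ and the circle through $y$ tangent at $u$ has radius $|u-y|^2/(2y_n)$; a single circle through both points tangent at $u$ exists precisely when these radii coincide, which is exactly the bisector equation. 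Hence the endpoints of $L_{xy}$ are precisely the tangency points, giving (ii), while the degenerate horizontal case $x_n=y_n$ gives the vertical bisector with $w=\infty$ and $v_{\Hn}(x,y)=\measuredangle(x,u,y)$, which is (i) (and agrees with \eqref{gen2} in the orthogonal subcase).

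I expect the reduction to $n=2$ to be the main obstacle, since it is the only genuinely $n$-dimensional step; the convexity of the viewing-angle superlevel set together with the reflection symmetry across $P$ is what makes it work. A naive argument that simply projects an arbitrary $z\in\partial\Hn$ onto $\ell$ can fail, because projecting a single point onto $\ell$ need not increase the viewing angle, so it is the symmetric midpoint construction inside the convex body $S$ that is essential. Everything after the reduction is planar and rests only on the inscribed-angle theorem together with the one-line computation matching the tangency condition with \eqref{cro}.
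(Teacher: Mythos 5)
Your planar argument is sound and genuinely different from the paper's: the paper settles the equal--height case by inspection and reduces the general case to it with a M\"obius map of $\Hn$ sending $w$ to $\infty$, whereas you characterize the local maximizers on the boundary line as tangency points and then identify tangency points with bisector endpoints via the identity $r=|u-x|^2/(2x_n)$ and \eqref{cro}; that identification is clean and correct. The genuine gap is in your reduction to $n=2$, in exactly the step you single out as essential: the superlevel set $S=\{z\in\Rn:\measuredangle(x,z,y)\ge\theta_0\}$ is \emph{not} convex when $\theta_0<\pi/2$. Normalize, in a plane through the line $xy$, so that $x=(-1,0)$ and $y=(1,0)$. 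The points seeing $[x,y]$ at angle $\ge\theta_0$ form a lens (intersection of two disks) only when $\theta_0\ge\pi/2$; when $\theta_0<\pi/2$ they form the \emph{union} of the upper half of the disk of radius $1/\sin\theta_0$ centred at $(0,\cot\theta_0)$ and the lower half of its mirror image. This union has reflex corners at $x$ and $y$ (interior angle $2(\pi-\theta_0)>\pi$), and it contains the two points $(1+\varepsilon,\pm\cot\theta_0)$ for small $\varepsilon>0$, whose midpoint $(1+\varepsilon,0)$ lies on the axis beyond $y$ and sees $[x,y]$ at angle near $0$; the same two points also destroy convexity of the solid of revolution in $\Rn$. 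This regime is not exotic but generic: by \eqref{gen2}, $v_{\Hn}(x,y)=\arctan(\cdot)<\pi/2$ for \emph{every} pair on a line orthogonal to $\partial\Hn$, so your midpoint argument is unjustified precisely where the theorem most needs it.

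The reduction can be repaired by rotation instead of reflection. First show $S\subset\{z_n\ge 0\}$: the strict superlevel set $S^o=\{z:\measuredangle(x,z,y)>\theta_0\}$ is open, connected (it contains the open segment $(x,y)$ and every point of it can be joined to that segment inside $S^o$), and cannot meet $\partial\Hn$, since a boundary point with angle $>\theta_0$ would contradict $\theta_0=v_{\Hn}(x,y)$; as $S^o$ contains points of positive height, connectedness forces $S^o\subset\{z_n>0\}$, and $S=\overline{S^o}\subset\{z_n\ge 0\}$. Now let $z_0\in\partial\Hn$ be a maximizer and rotate it about the line through $x,y$ to the point $z_0'$ of its orbit circle with minimal $n$-th coordinate; $z_0'$ lies in the vertical plane $P$, it is still in $S$ because the angle is invariant under this rotation, and its height is $\le (z_0)_n=0$, hence exactly $0$ by the inclusion just proved, so $z_0'\in\ell$ is a maximizer (if the line $xy$ is vertical the orbit circles are horizontal and the same rotation works for any choice of $P$). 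With this replacement your proof closes. Note that the paper itself asserts ``it suffices to consider the $2$-dimensional case'' without justification, so your instinct that this is the step requiring an argument is right; it is only the convexity route to it that fails.
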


\begin{proof}
It suffices to consider the $2$-dimensional case. We divide the proof into two cases.

{\bf Case 1.} $d(x,\partial\UH)=d(y,\partial\UH)$.

In this case, the hyperbolic bisector $L_{xy}$ of the hyperbolic segment $J[x,y]$ is also the bisector of the Euclidean segment $[x,y]$. We may assume that $u=L_{xy}\cap\partial\UH$. Then by simple geometric observation, we see that
$$v_{\UH}(x,y)=\measuredangle(x,u,y)\,.$$

\medskip
\begin{figure}[h]
\begin{minipage}[t]{0.45\linewidth}
\centering
\includegraphics[width=7cm]{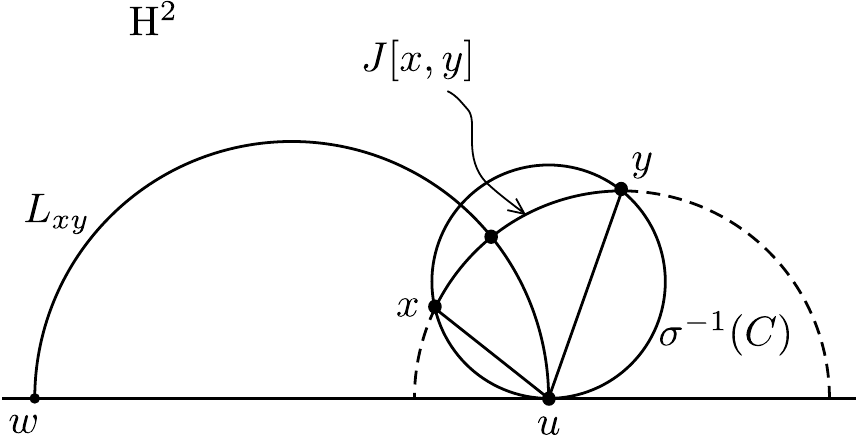}
\caption{\label{a1}}
\end{minipage}
\hfill
\hspace{1cm}
\begin{minipage}[t]{0.45\linewidth}
\centering
\includegraphics[width=5.3cm]{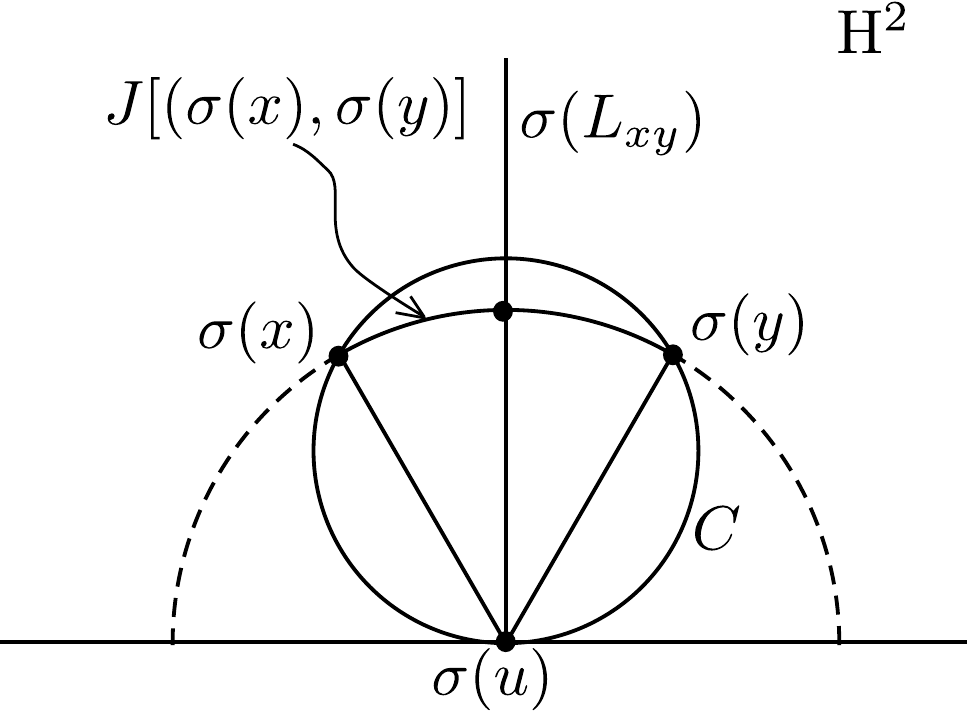}
\caption{\label{a2}}
\end{minipage}
\caption*{The M\"obius transformation $\sigma\in\mbox{M\"ob}(\UH)$ with $\sigma(w)=\infty$ maps Figure \ref{a1} onto  Figure \ref{a2}. Here $v_{\UH}(x,y)=\measuredangle(x,u,y)$.}
\end{figure}
\medskip

{\bf Case 2.} $d(x,\partial\UH)\neq d(y,\partial\UH)$.

Without loss of generality, we may assume that $d(x,\partial\UH)<d(y,\partial\UH)$ and that $u$ is located on the diameter of the circle containing $J[x,y]$. Then
$$\max\{\measuredangle(x,u,y), \measuredangle(x,w,y)\}=\measuredangle(x,u,y)\,.$$
By using a M\"obius transformation $\sigma\in\mbox{M\"ob}(\UH)$ with $\sigma(w)=\infty$, we see that $\sigma(L_{xy})$ is a Euclidean line orthogonal to $\partial\UH$ and also the hyperbolic bisector of the hyperbolic segment $J[\sigma(x),\sigma(y)]$. Hence $d(\sigma(x),\partial\UH)=d(\sigma(y),\partial\UH)$.  By the argument of Case 1, we have that $v_{\UH}(\sigma(x),\sigma(y))=\measuredangle(\sigma(x),\sigma(u),\sigma(y))$. Therefore, it is clear that the circle $C$ through $\sigma(x)$, $\sigma(u)$, $\sigma(y)$ is tangent to $\partial\UH$. Because M\"obius transformations preserve circles, we conclude that the circle $\sigma^{-1}(C)$ through $x,u,y$ is also tangent to $\partial\UH$, which implies that
$$v_{\UH}(x,y)=\measuredangle(x,u,y)\,.$$
This completes the proof.
\end{proof}

 In a similar way, we have the following conclusion for the visual angle metric in the unit ball.

\begin{thm}
Given two distinct points $x\,,y\in\Bn$, let $J[x,y]$ be the hyperbolic segment joining $x$ and $y$. Let $L_{xy}$ be the hyperbolic bisector of $J[x,y]$ with two endpoints $u$ and $w$ in $\partial\Bn$. Then one of the endpoints $u$ and $w$ is the extremal point for $v_{\Bn}(x,y)$, specifically,\\
(i) if $x$ and $y$ are symmetric with respect to the origin $0$, then $v_{\Bn}(x,y)=\measuredangle(x,u,y)=\measuredangle(x,w,y)$;\\
(ii) if $x$ and $y$ are not symmetric with respect to the origin $0$, then $v_{\Bn}(x,y)=\max\{\measuredangle(x,u,y), \measuredangle(x,w,y)\}$.
\end{thm}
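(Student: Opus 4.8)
The plan is to reduce the ball case to the half-space theorem just proved, exploiting the fact that the visual angle metric is a conformal (indeed Möbius) invariant in the appropriate sense and that both statements concern the same hyperbolic bisector configuration. The natural first step is to dispose of part (i): if $x$ and $y$ are symmetric with respect to the origin, then $0$ is the hyperbolic midpoint of $J[x,y]$, and by symmetry the hyperbolic bisector $L_{xy}$ is a diameter of $\Bn$ passing through $0$, with endpoints $u$ and $w$ that are antipodal on $\partial\Bn$. The entire configuration $\{x,y,L_{xy}\}$ is invariant under the central inversion $z\mapsto -z$, which swaps $u$ and $w$ while fixing $\{x,y\}$ as a set (it swaps $x$ and $y$). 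Hence $\measuredangle(x,u,y)=\measuredangle(x,w,y)$ automatically, and it remains only to check that this common value equals $v_\Bn(x,y)$; this is the direct analogue of Case 1 in the half-space proof, where the Euclidean and hyperbolic bisectors coincide.

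For part (ii) I would mimic Case 2 of the half-space argument. Choose the labeling so that $\max\{\measuredangle(x,u,y),\measuredangle(x,w,y)\}=\measuredangle(x,u,y)$. The idea is to apply a Möbius transformation $\sigma\in\mbox{M\"ob}(\Bn)$ that sends the configuration into the symmetric situation of part (i) — concretely, a hyperbolic translation along the geodesic $J[x,y]$ that moves the hyperbolic midpoint of $J[\sigma(x),\sigma(y)]$ to the origin, so that $\sigma(x)$ and $\sigma(y)$ become symmetric with respect to $0$. Such a $\sigma$ maps the hyperbolic bisector $L_{xy}$ to the hyperbolic bisector of $J[\sigma(x),\sigma(y)]$, which by part (i) is a diameter through $0$ with antipodal endpoints $\sigma(u)$ and $\sigma(w)$. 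By part (i) we then know $v_\Bn(\sigma(x),\sigma(y))=\measuredangle(\sigma(x),\sigma(u),\sigma(y))$, and the circle $C$ through $\sigma(x),\sigma(u),\sigma(y)$ has a characterizing geometric property (it is the circle on which the visual angle is extremal). Since Möbius transformations preserve circles and the extremal-point structure, pulling back by $\sigma^{-1}$ gives that the circle through $x,u,y$ realizes the supremum, so $v_\Bn(x,y)=\measuredangle(x,u,y)$.

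The technical point that needs care, and which I expect to be the main obstacle, is the passage ``$v_\Bn(\sigma(x),\sigma(y))$ is realized at $\sigma(u)$ $\Rightarrow$ $v_\Bn(x,y)$ is realized at $u$.'' Unlike the hyperbolic metric, the visual angle $v_G$ is \emph{not} in general a Möbius invariant, so one cannot simply transport the equality $v_\Bn(\sigma(x),\sigma(y))=\measuredangle(\sigma(x),\sigma(u),\sigma(y))$ back. In the half-space proof this is circumvented by identifying the extremal boundary point via a conformally invariant property, namely tangency: the circle through the three image points is tangent to $\partial\Bn$, and tangency of circles to the boundary is preserved by $\sigma^{-1}$. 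So the crux is to establish the analogous characterization in the ball, that the extremal point $u$ for $v_\Bn(x,y)$ is exactly the point where the circle through $x,y$ realizing the largest inscribed angle touches $\partial\Bn$, and to verify that the symmetric configuration of part (i) indeed produces a circle tangent to the boundary. Once this tangency characterization is in hand and shown to be Möbius-stable, the reduction closes and the proof parallels the half-space case verbatim.

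\medskip
\begin{proof}
It suffices to consider the $2$-dimensional case. We divide the proof into two cases.

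\textbf{Case 1.} $x$ and $y$ are symmetric with respect to the origin $0$.

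In this case the origin $0$ is the hyperbolic midpoint of $J[x,y]$, so the hyperbolic bisector $L_{xy}$ passes through $0$; being a hyperbolic geodesic through the center of $\Bn$, it is a Euclidean diameter, and its endpoints $u$ and $w$ are antipodal points of $\partial\Bn$. The central inversion $z\mapsto -z$ is an element of $\mbox{M\"ob}(\Bn)$ which interchanges $x$ and $y$ and interchanges $u$ and $w$, whence $\measuredangle(x,u,y)=\measuredangle(x,w,y)$. A direct geometric observation, analogous to Case 1 in the half-space, shows that this common value equals $v_\Bn(x,y)$, and the extremal circle through $x,u,y$ is tangent to $\partial\Bn$.

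\textbf{Case 2.} $x$ and $y$ are not symmetric with respect to the origin $0$.

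We label $u$ and $w$ so that $\max\{\measuredangle(x,u,y),\measuredangle(x,w,y)\}=\measuredangle(x,u,y)$. Let $\sigma\in\mbox{M\"ob}(\Bn)$ be a hyperbolic translation along $J[x,y]$ carrying the hyperbolic midpoint of $J[x,y]$ to $0$, so that $\sigma(x)$ and $\sigma(y)$ are symmetric with respect to $0$. Then $\sigma$ maps $L_{xy}$ onto the hyperbolic bisector of $J[\sigma(x),\sigma(y)]$, with endpoints $\sigma(u)$ and $\sigma(w)$. By Case 1, $v_\Bn(\sigma(x),\sigma(y))=\measuredangle(\sigma(x),\sigma(u),\sigma(y))$ and the circle $C$ through $\sigma(x),\sigma(u),\sigma(y)$ is tangent to $\partial\Bn$. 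Since M\"obius transformations preserve circles and tangency, the circle $\sigma^{-1}(C)$ through $x,u,y$ is also tangent to $\partial\Bn$, which implies that
\[
v_{\Bn}(x,y)=\measuredangle(x,u,y)=\max\{\measuredangle(x,u,y),\measuredangle(x,w,y)\}\,.
\]
This completes the proof.
\end{proof}
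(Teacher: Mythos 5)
Your proposal is correct and follows essentially the same route as the paper: Case 1 (symmetric points) is handled by the observation that the hyperbolic bisector is a Euclidean diameter whose endpoints are the extremal points, and Case 2 is reduced to Case 1 by a M\"obius transformation $T\in\mbox{M\"ob}(\Bn)$ with $T(x)=-T(y)$, transporting extremality back via the M\"obius-invariant tangency of the circle through $x,u,y$ to $\partial\Bn$. Your explicit remarks that $v_G$ itself is not M\"obius invariant (so tangency must carry the argument) and the central-inversion justification of $\measuredangle(x,u,y)=\measuredangle(x,w,y)$ are nice clarifications of points the paper leaves implicit, but the underlying argument is the same.
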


\begin{proof}
It suffices to consider the $2$-dimensional case. We divide the proof into two cases.

{\bf Case 1.} The points $x$ and $y$ are symmetric with respect to the origin $0$.

In this case, the hyperbolic bisector $L_{xy}$ of the hyperbolic segment $J[x,y]$ is also the bisector of the Euclidean segment $[x,y]$. Then by simple geometric observation, we see that
$$v_{\Bn}(x,y)=\measuredangle(x,u,y)=\measuredangle(x,w,y)\,.$$

\begin{figure}[h]
\begin{minipage}[t]{0.44\linewidth}
\centering
\includegraphics[width=5cm]{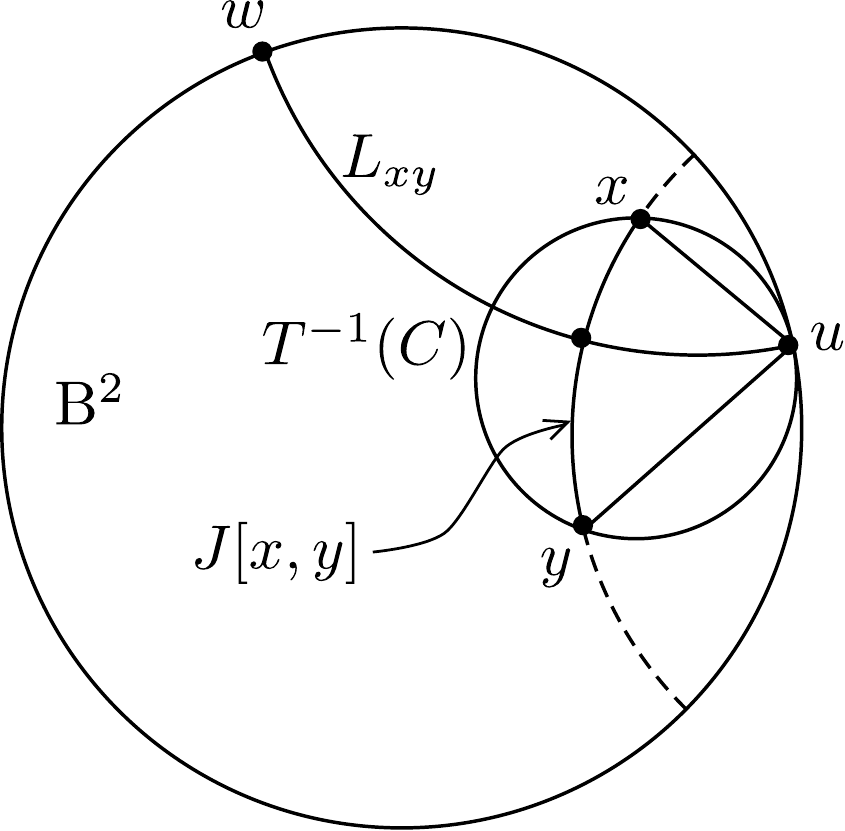}
\caption{\label{a3}}
\end{minipage}
\hfill
\hspace{1.2cm}
\begin{minipage}[t]{0.44\linewidth}
\centering
\includegraphics[width=4.5cm]{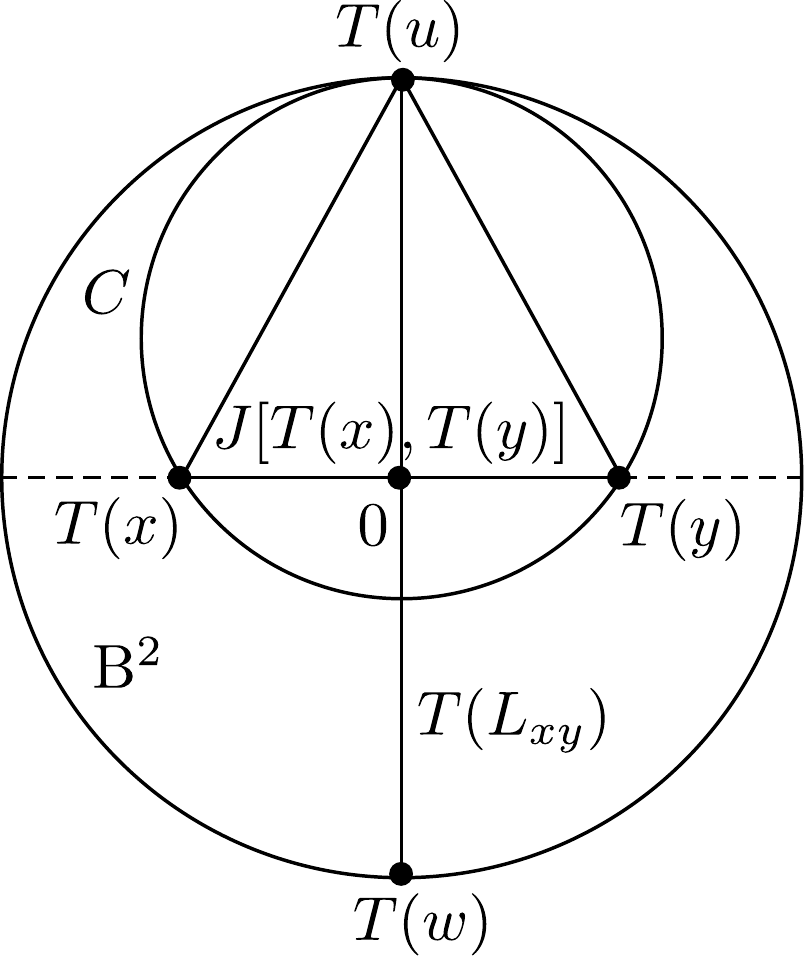}
\caption{\label{a4}}
\end{minipage}
\caption*{The M\"obius transformation $T\in\mbox{M\"ob}(\B)$ with $T(x)=-T(y)$ maps Figure \ref{a3} onto  Figure \ref{a4}. Here $v_{\B}(x,y)=\measuredangle(x,u,y)$.}
\end{figure}

{\bf Case 2.} The two points $x$ and $y$ are not symmetric with respect to the origin $0$.

The hyperbolic geodesic line through $x$ and $y$ divides $\partial\B$ into two arcs. Without loss of generality, we may assume that $u$ is in the minor arc or the semicircle (if $x,0,y$ are collinear) of $\partial\B$. Then
$$\max\{\measuredangle(x,u,y), \measuredangle(x,w,y)\}=\measuredangle(x,u,y)\,.$$
By using a M\"obius transformation $T\in\mbox{M\"ob}(\B)$ with $T(x)=-T(y)$, we see that $T(L_{xy})$ is the hyperbolic bisector of the hyperbolic segment $J[T(x),T(y)]$. By the argument of Case 1, we have that $v_{\B}(T(x),T(y))=\measuredangle(T(x),T(u),T(y))$. Therefore, it is clear that the circle $C$ through $T(x)$, $T(u)$, $T(y)$ is tangent to $\partial\B$. Because M\"obius transformations preserve circles, we conclude that the circle $T^{-1}(C)$ through $x,u,y$ is also tangent to $\partial\B$, which implies that
$$v_{\B}(x,y)=\measuredangle(x,u,y)\,.$$
This completes the proof.
\end{proof}

\begin{rem}
In \cite{vw}, the authors presented several methods of geometric construction to find the hyperbolic midpoint of a hyperbolic segment only based on Euclidean compass and ruler. These methods of construction can be used to find the extremal points for the visual angle metric in the upper half space and in the unit ball as the above theorems show.
\end{rem}

\bigskip


\section{Uniform continuity of quasiconformal maps}\label{section 4}

In this section, we study the uniform continuity of quasiconformal maps with respect to the triangular ratio metric and the visual angle metric.

We use notation and terminology from \cite{avv} in the sequel. We always take the boundary of a domain in $\Rn$ with respect to $\overline{\Rn}$ in this section. Let $\gamma_n$ and $\tau_n$ be the conformal capacities of the $n$-dimensional Gr\"otzsch ring and Teichm\"uller ring, respectively. Both of $\gamma_n$ and $\tau_n$ are continuous and strictly decreasing, see \cite[(8.34), Theorem 8.37]{avv}. Let $\lambda_n\in [4, 2e^{n-1}]$
be the Gr\"otzsch ring constant, see \cite[(8.38)]{avv}.

For $K>0$, the distortion function $\varphi_{K,n}(r)$ is a self-homeomorphism of $(0,1)$ defined by \cite[(8.69)]{avv}
$$\varphi_{K,n}(r)=\frac{1}{\gamma_n^{-1}(K\gamma_n(1/r))}\,.$$
For $K\geq 1$, $n\geq 2$, and $r\in (0,1)$,
\begin{equation}\label{phikn}
\varphi_{K,n}(r)\leq \lambda_n^{1-\alpha}r^{\alpha}\,,\,\,\alpha=K^{1/(1-n)}\,,
\end{equation}
see \cite[Theorem 7.47]{vu1}.

For $K\geq 1$, $n\geq 2$, and $t\in (0,1)$, the function $\Theta_{K,n}(t)$ in \cite[(2.9)]{fmv} is defined by:
\begin{equation}\label{ttg}
\Theta_{K,n}(t)=\frac{1}{\tau_n^{-1}(K\gamma_n(1/t))}=\frac{x^2}{1-x^2},\quad x=\varphi_{2^{n-1}K,n}(t)\,.
\end{equation}

If $\partial G$ has positive capacity, it can be shown that the conformal invariant $\mu_G$ is a metric in the domain $G$; this metric is called {\it the modulus metric}, see \cite[8.80]{avv}.
If $D$ is a subdomain of $G$, then $\mu_G(x,y)\le\mu_D(x,y)$ for all $x\,,y\in D$, see \cite[Remarks 8.83(2)]{avv}.
A $K$-quasiconformal map $f:G\rightarrow fG$ is $K$-bilipschitz in the $\mu_G$ metric, see \cite[(16.11)]{avv}.

\begin{lem}\label{lem:8.86avv}\cite[Lemma 8.86]{avv}
Let $G$ be a proper subdomain of $\Rn$ such that $\overline{\Rn}\setminus G$ is a nondegenerate continuum. Then for $x\,,y\in G\,,x\neq y$,
$$\mu_G(x,y)\ge \tau_n\left(\frac{\min\{d(x,\partial G),d(y,\partial G)\}}{|x-y|}\right)\ge \tau_n\left(\frac{d(x,\partial G)}{|x-y|}\right)\,.$$
\end{lem}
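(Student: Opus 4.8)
The plan is to express the modulus metric as an infimum of condenser capacities and then invoke the extremal property of the Teichm\"uller ring. Recall from \cite[8.80]{avv} that
\[
\mu_G(x,y)={\inf_{C_{xy}}}\,{\rm cap}\,(G,C_{xy}),
\]
where the infimum is taken over all continua $C_{xy}\subset G$ with $x,y\in C_{xy}$. Each condenser $(G,C_{xy})$ is a ring in $\overline{\Rn}$ whose two complementary components are the inner continuum $C_{xy}$ and the outer set $F=\overline{\Rn}\setminus G$. Both hypotheses on the complement enter exactly here: $F$ is a single nondegenerate continuum, hence an admissible complementary component, and since $G\subsetneq\Rn$ we always have $\infty\in F$.

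The second inequality is immediate: as $\tau_n$ is strictly decreasing and $\min\{d(x,\partial G),d(y,\partial G)\}\le d(x,\partial G)$, we obtain $\tau_n(\min\{d(x,\partial G),d(y,\partial G)\}/|x-y|)\ge \tau_n(d(x,\partial G)/|x-y|)$. For the first inequality I would assume without loss of generality that $d:=\min\{d(x,\partial G),d(y,\partial G)\}=d(x,\partial G)$ and choose $z_0\in\partial G\subset F$ with $|x-z_0|=d$. For every admissible $C_{xy}$ the ring $R=G\setminus C_{xy}$ separates the pair $\{x,y\}\subset C_{xy}$ from the pair $\{z_0,\infty\}\subset F$. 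By Teichm\"uller's theorem \cite{avv} (the Teichm\"uller ring minimizes the capacity among all rings separating two finite points from a third finite point together with $\infty$), translating $x$ to the origin gives
\[
{\rm cap}\,(G,C_{xy})={\rm cap}\,(R)\ge \tau_n\!\left(\frac{|x-z_0|}{|x-y|}\right)=\tau_n\!\left(\frac{d}{|x-y|}\right).
\]
Since the right-hand side is independent of $C_{xy}$, passing to the infimum over $C_{xy}$ yields $\mu_G(x,y)\ge \tau_n(d/|x-y|)$, which is the first inequality.

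The step requiring the most care is the application of Teichm\"uller's theorem in the correct direction together with the verification of the separation data. One must confirm that $C_{xy}$ and $F$ are genuinely the two complementary continua of the condenser (their disjointness is clear because $C_{xy}\subset G$), and that the defining distances match the Teichm\"uller parameter, namely $|z_2|/|z_1|=|x-z_0|/|x-y|=d/|x-y|$ after normalization. The continuum and nondegeneracy hypotheses on $\overline{\Rn}\setminus G$ are precisely what guarantee that $F$ supplies both a finite point $z_0$ realizing the distance to $\partial G$ and the point $\infty$, so that the Teichm\"uller configuration is available and $\tau_n(d/|x-y|)$ is the correct extremal value.
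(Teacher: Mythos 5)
This lemma is quoted in the paper directly from \cite[Lemma 8.86]{avv} without proof, so there is no in-paper argument to compare against; your reconstruction via the definition $\mu_G(x,y)=\inf_{C_{xy}}\mathrm{cap}\,(G,C_{xy})$, the choice of a nearest boundary point $z_0$, and the extremal (spherical symmetrization) property of the Teichm\"uller ring is correct and is essentially the standard proof given in that reference, with the second inequality following from the monotonicity of $\tau_n$ exactly as you say. The one point worth tightening is that $G\setminus C_{xy}$ need not be connected, hence need not literally be a ring: you should invoke the Teichm\"uller lower bound in its curve-family form (the modulus of the family of curves joining the continuum $C_{xy}\ni x,y$ to the continuum $F\supset\{z_0,\infty\}$ is at least $\tau_n\bigl(|x-z_0|/|x-y|\bigr)$), which applies to any pair of disjoint nondegenerate continua and makes the rest of your argument go through verbatim.
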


\begin{lem}\label{exs:8.85avv}\cite[Exercises 8.85]{avv}
Let $G$ be a proper subdomain of $\Rn$, let $x\in G$ and $B_x=B^n(x,d(x,\partial G))$. For $y\in B_x\,,x\neq y$,
$$\mu_G(x,y)\le \mu_{B_x}(x,y) = \gamma_n\left(\frac{d(x,\partial G)}{|x-y|}\right)\,.$$
\end{lem}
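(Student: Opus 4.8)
The plan is to prove the inequality and the equality separately: the inequality $\mu_G(x,y)\le \mu_{B_x}(x,y)$ is a direct consequence of domain monotonicity, while the equality reduces, after a similarity normalization, to the standard evaluation of the modulus metric of the unit ball with one point at the center. First I would observe that $B_x=B^n(x,d(x,\partial G))$ is the largest open ball centered at $x$ that is contained in $G$; in particular $B_x\subset G$, and since $y\in B_x$ both points lie in this subdomain. Applying the monotonicity property recorded just before Lemma~\ref{lem:8.86avv} (namely, if $D\subset G$ then $\mu_G(x,y)\le\mu_D(x,y)$ for all $x,y\in D$) with $D=B_x$ then yields $\mu_G(x,y)\le\mu_{B_x}(x,y)$ at once.

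For the equality I would use the conformal invariance of the modulus metric. The similarity $T(z)=(z-x)/d(x,\partial G)$ maps $B_x$ onto $\Bn$, sends $x$ to $0$, and sends $y$ to a point $T(y)$ with $|T(y)|=|x-y|/d(x,\partial G)=:t\in(0,1)$. By invariance of $\mu$ under similarities we get $\mu_{B_x}(x,y)=\mu_{\Bn}(0,T(y))$, so the task is reduced to computing $\mu_{\Bn}$ between the center and an interior point.

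Finally I would invoke the identity $\mu_{\Bn}(0,z)=\gamma_n(1/|z|)$, valid for $z\in\Bn\setminus\{0\}$: the extremal continuum joining $0$ to $z$ is the radial segment $[0,z]$, and the complementary ring $\Bn\setminus[0,z]$ is, up to a rotation, the Gr\"otzsch ring, whose capacity equals $\gamma_n(1/|z|)$ by definition. Taking $|z|=t$ gives $\mu_{\Bn}(0,T(y))=\gamma_n(1/t)=\gamma_n\big(d(x,\partial G)/|x-y|\big)$, which completes the equality. The only substantive ingredient here is the extremality of the radial segment, which rests on a spherical symmetrization argument; this is the main obstacle, but since it is part of the background developed in \cite{avv} I would cite the identity $\mu_{\Bn}(0,z)=\gamma_n(1/|z|)$ rather than reprove it, after which every remaining step is purely formal.
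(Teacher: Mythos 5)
Your proposal is correct. The paper in fact gives no proof of this lemma at all: it is imported verbatim from \cite[Exercises 8.85]{avv}, with the domain monotonicity of $\mu$ recorded separately (citing \cite[Remarks 8.83(2)]{avv}), and your argument --- monotonicity applied to $B_x\subset G$, similarity invariance of $\mu$ to normalize to $\mu_{\Bn}(0,T(y))$, and the classical identity $\mu_{\Bn}(0,z)=\gamma_n(1/|z|)$ --- is exactly the standard proof that the citation points to. One minor imprecision worth fixing: in the normalization of \cite{avv} the Gr\"otzsch ring has complementary components $\overline{\Bn}$ and $[se_1,\infty]$, so $\Bn\setminus[0,z]$ is not a rotation of it but its image under the inversion $w\mapsto w/|w|^2$ (composed with a rotation); since capacity is M\"obius invariant and you defer to the cited identity rather than reproving it, this does not affect correctness.
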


\begin{lem}\label{pro6.2}
For $K\geq 1$, $n\geq 2$, let $\alpha=K^{1/(1-n)}$.\\
(1) If $t_0\in (0,1)$ satisfies $\lambda_{n}^{2-\alpha}{t_0}^{\alpha}=1/2$, then for $t\in (0,t_0]$ there holds
$$\Theta_{K,n}(t)\leq 2\lambda_n^{2-\alpha}t^{\alpha}\,.$$\\
(2) If $t_1\in (0,t_0]$ satisfies $\lambda_n^{2-\alpha}{t_1}^{\alpha}=1/4$, then for $t\in (0,t_1]$ there holds
$$\Theta_{K,n}(t)\leq \frac 12\,.$$
\end{lem}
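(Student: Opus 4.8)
The plan is to reduce both parts to the explicit power bound \eqref{phikn} for the distortion function and then use only elementary monotonicity. First I would invoke the second equality in the definition \eqref{ttg}, which writes $\Theta_{K,n}(t)=x^2/(1-x^2)$ with $x=\varphi_{2^{n-1}K,n}(t)$. The crucial point is the value of the exponent in \eqref{phikn} when $K$ is replaced by $2^{n-1}K$: setting $\tilde\alpha=(2^{n-1}K)^{1/(1-n)}$ and using $(2^{n-1})^{1/(1-n)}=2^{(n-1)/(1-n)}=2^{-1}$, one finds $\tilde\alpha=\alpha/2$. Hence \eqref{phikn} gives
$$x=\varphi_{2^{n-1}K,n}(t)\le \lambda_n^{\,1-\alpha/2}\,t^{\alpha/2},$$
and squaring produces the single inequality underlying both claims,
$$x^2\le \lambda_n^{\,2-\alpha}\,t^{\alpha}.$$

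For part (1) I would observe that $\alpha>0$ makes $t\mapsto\lambda_n^{2-\alpha}t^\alpha$ increasing, so on $(0,t_0]$ the defining relation $\lambda_n^{2-\alpha}t_0^\alpha=1/2$ yields $x^2\le\lambda_n^{2-\alpha}t^\alpha\le 1/2$. Then $1-x^2\ge 1/2$, so $1/(1-x^2)\le 2$ and
$$\Theta_{K,n}(t)=\frac{x^2}{1-x^2}\le 2x^2\le 2\lambda_n^{2-\alpha}t^\alpha,$$
as claimed. For part (2), since $t_1\le t_0$ part (1) applies on $(0,t_1]$, and combining it with monotonicity and $\lambda_n^{2-\alpha}t_1^\alpha=1/4$ gives, for $t\in(0,t_1]$,
$$\Theta_{K,n}(t)\le 2\lambda_n^{2-\alpha}t^\alpha\le 2\lambda_n^{2-\alpha}t_1^\alpha=2\cdot\tfrac14=\tfrac12.$$

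I expect no genuine difficulty beyond the exponent identity $(2^{n-1}K)^{1/(1-n)}=\alpha/2$, which is precisely what arranges for the factor $2^{n-1}$ built into $\Theta_{K,n}$ to turn the distortion bound into the clean square $x^2\le\lambda_n^{2-\alpha}t^\alpha$. Once that is established, both estimates follow from the trivial inequality $x^2/(1-x^2)\le 2x^2$, valid whenever $x^2\le 1/2$, together with the two defining relations for $t_0$ and $t_1$.
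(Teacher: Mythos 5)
Your proof is correct and follows essentially the same route as the paper: both reduce $\Theta_{K,n}$ via \eqref{ttg} to $x^2/(1-x^2)$ with $x=\varphi_{2^{n-1}K,n}(t)$, use \eqref{phikn} (with the exponent identity $(2^{n-1}K)^{1/(1-n)}=\alpha/2$) to get $x^2\le\lambda_n^{2-\alpha}t^\alpha$, and then exploit $x^2\le 1/2$ on $(0,t_0]$ to bound $1/(1-x^2)\le 2$. The paper merely states these steps in compressed form (part (2) is dismissed as "immediate from part (1)"), so your write-up is the same argument with the implicit details made explicit.
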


\begin{proof}
(1) By \eqref{phikn} we have
$$\varphi_{2^{n-1}K,n}^2(t)\leq \lambda_n^{2-\alpha}t^{\alpha}\,,$$
and hence by \eqref{ttg} for $t\in (0,t_0]$,
$$\Theta_{K,n}(t)\leq 2\lambda_n^{2-\alpha}t^{\alpha}\,.$$

(2) This claim follows immediately from part (1).
\end{proof}

\begin{thm}\label{qc}
Let $D, D'$ be two proper subdomains of $\Rn$  such that $\partial{D}$ is connected. Let $f:D\rightarrow D'=fD$ be a $K$-quasiconformal mapping. Then for all $x,y\in D$,
$$s_{D'}(f(x),f(y))\leq C_1 s_{D}(x,y)^{\alpha},\, \alpha=K^{1/(1-n)}\,,$$
where
$$C_1=\max \left\{2\lambda_n^{2-\alpha}(2+t_0)^{\alpha}, \left(\frac{2+t_0}{t_0}\right)^{\alpha}\right\}\,,$$
and $t_0$ is as in Lemma \ref{pro6.2} (1).
\end{thm}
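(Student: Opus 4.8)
The plan is to route the estimate through the modulus metric $\mu$, for which a $K$-quasiconformal map is $K$-bilipschitz, and to convert the resulting control on $|f(x)-f(y)|/d(f(x),\partial D')$ into a power bound in $s_D(x,y)$ by a dichotomy governed by the threshold $t_0$ of Lemma \ref{pro6.2}. Since $s$ is symmetric in its arguments, I first relabel so that $d(x,\partial D)\ge d(y,\partial D)$ and set $t=|x-y|/d(x,\partial D)$. The one elementary geometric input is the lower estimate $s_D(x,y)\ge |x-y|/(|x-y|+2d(y,\partial D))\ge t/(2+t)$, obtained by testing the supremum defining $s_D$ against the boundary point nearest $y$ and using the triangle inequality, the last step invoking $d(y,\partial D)\le d(x,\partial D)$. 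This is the bridge between the distance-ratio quantities on which the modulus estimates act and the triangular ratio metric.

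Next I assemble the modulus chain under the standing assumption $t<1$, which guarantees $y\in B^n(x,d(x,\partial D))$ so that Lemma \ref{exs:8.85avv} applies. From the trivial bound $s_{D'}(f(x),f(y))\le |f(x)-f(y)|/d(f(x),\partial D')$, I invoke Lemma \ref{lem:8.86avv} on $D'$ to get $\tau_n(d(f(x),\partial D')/|f(x)-f(y)|)\le \mu_{D'}(f(x),f(y))$, then the $K$-bilipschitz bound $\mu_{D'}(f(x),f(y))\le K\mu_D(x,y)$, and finally Lemma \ref{exs:8.85avv} on $D$ to get $\mu_D(x,y)\le\gamma_n(d(x,\partial D)/|x-y|)=\gamma_n(1/t)$. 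Since $\tau_n$ is strictly decreasing, inverting the chain yields exactly $s_{D'}(f(x),f(y))\le\Theta_{K,n}(t)$ in the notation of \eqref{ttg}.

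The proof then closes with a dichotomy on $t$ against $t_0$. If $t\le t_0$ then $t<1$, so the chain is valid, Lemma \ref{pro6.2}(1) gives $\Theta_{K,n}(t)\le 2\lambda_n^{2-\alpha}t^\alpha$, and $s_D\ge t/(2+t)\ge t/(2+t_0)$ yields $t\le (2+t_0)s_D(x,y)$; combining these produces the first candidate constant $2\lambda_n^{2-\alpha}(2+t_0)^\alpha$. If instead $t>t_0$, I discard the modulus machinery and use only $s_{D'}\le 1$ together with the monotonicity of $t\mapsto t/(2+t)$, which gives $s_D(x,y)\ge t_0/(2+t_0)$ and hence $1\le ((2+t_0)/t_0)^\alpha s_D(x,y)^\alpha$. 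Taking the larger of the two constants gives $C_1$.

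I expect the main difficulty to be the bookkeeping rather than any single hard estimate. One must relabel the points so that the constraint $t<1$ required by Lemma \ref{exs:8.85avv} is compatible with the geometric lower bound for $s_D$, and one must apply the lower modulus estimate Lemma \ref{lem:8.86avv} on the \emph{image} domain $D'$; this is where the connectedness hypothesis on the boundary enters, ensuring the complement is a nondegenerate continuum so that $\mu$ is a metric and the lemma is available. The structural observation that makes everything fit is that the dichotomy threshold must be taken equal to the $t_0$ of Lemma \ref{pro6.2}, which is precisely what lets the two candidate constants assemble into the stated $C_1$.
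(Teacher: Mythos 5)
Your proposal is correct and follows essentially the same route as the paper's proof: the same modulus-metric chain (the trivial bound $s_{D'}(f(x),f(y))\le |f(x)-f(y)|/d(f(x),\partial D')$, Lemma \ref{lem:8.86avv} on the image side, $K$-bilipschitz continuity of $\mu$, Lemma \ref{exs:8.85avv} on the source side, giving $s_{D'}\le\Theta_{K,n}(t)$ for $t=|x-y|/d(x,\partial D)$), the same elementary lower bound $s_D(x,y)\ge t/(2+t)$, and the same dichotomy at $t_0$ producing exactly the stated $C_1$. The only divergence is that you apply Lemma \ref{lem:8.86avv} directly to $D'$, whereas the paper first passes to an auxiliary unbounded domain $G'\supset D'$ with $d(f(x),\partial G')=d(f(x),\partial D')$ and connected boundary precisely in order to verify the lemma's hypothesis that the complement is a nondegenerate continuum; your shortcut is legitimate, but it silently uses the fact (which the paper's detour is designed to secure, and which deserves a line of justification) that $\partial D'$ is connected---inherited from $\partial D$ via the homeomorphism $f$---and that a proper subdomain of $\Rn$ with connected boundary in $\overline{\Rn}$ has a complement that is a connected compact set with more than one point.
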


\begin{proof}
The result is trivial for $x=y$. Therefore we only need to prove the theorem for $x\neq y$.

We first consider the case $|x-y|\le t_0 d(x,\partial D)$ for $x\in D$.
It is easy to see that $\partial D' = \partial (fD)$ is connected because $f$ is a homeomorphism and $\partial D$ is connected.
Therefore, there exists an unbounded domain $G'$ such that $D' \subset G'$, $d(f(x), \partial G') =d(f(x), \partial D')$ and $\partial G'$ is connected. Then for $t \ge d(f(x), \partial G')$, we have
$$
S^{n-1}(f(x),t)\cap \partial G' \neq \emptyset \,,
$$
and hence $\overline{\Rn}\setminus G'$ is a nondegenerate continuum.
By Lemma \ref{lem:8.86avv}, we have
$$\tau_n\left(\frac{d(f(x),\partial G')}{|f(x)-f(y)|}\right)\le \mu_{G'}(f(x),f(y))\le \mu_{fD}(f(x),f(y)) .$$
Let $B_x=B^n(x,d(x,\partial D))$. Then $y\in B_x$ and by Lemma \ref{exs:8.85avv},
$$\mu_D(x,y)\le\mu_{B_x}(x,y)=\gamma_n\left(\frac{d(x,\partial D)}{|x-y|}\right)\,.$$
Combining the above two inequalities, we get
$$\tau_n\left(\frac{d(f(x),\partial D')}{|f(x)-f(y)|}\right)\le \mu_{fD}(f(x),f(y))\le K\mu_D(x,y)\le K\gamma_n\left(\frac{d(x,\partial D)}{|x-y|}\right)\,,$$
and hence by Lemma \ref{pro6.2} (1),
$$s_{D'}(f(x),f(y))\le\frac{|f(x)-f(y)|}{d(f(x),\partial D')}\le \Theta_{K,n}\left(\frac{|x-y|}{d(x,\partial D)}\right) \le 2\lambda_n^{2-\alpha}\left(\frac{|x-y|}{d(x,\partial D)}\right)^{\alpha}\,.$$
On the other hand,
 $$s_D(x,y)\geq \frac{|x-y|}{2\min\{d(x,\partial D), d(y,\partial D)\} +|x-y| }\geq \frac{|x-y|}{(2+t_0) d(x,\partial D)}.$$
Therefore, for $|x-y|\le t_0 d(x,\partial D)$,
\bequu
s_{D'}(f(x),f(y)) &\leq & 2\lambda_n^{2-\alpha}\left((2+ t_0)s_D(x,y)\right)^{\alpha}\\
&=& 2\lambda_n^{2-\alpha}(2+ t_0)^{\alpha}s_D(x,y)^{\alpha}.
\eequu

Now it only remains to prove the case $|x-y|>t_0 d(x,\partial D)$ for $x\,,y\in D$. We easily see that
$$s_D(x,y)\geq \frac{|x-y|}{2|x-y|/{t_0}+|x-y|}=\frac{t_0}{2+t_0}\,,$$
and hence
$$s_{D'}(f(x),f(y))\leq 1 \leq \left(\frac{2+t_0}{t_0}\right)^{\alpha} s_D(x,y)^{\alpha}\,.$$

Thus we complete the proof by choosing the constant
$$C_1=  \max \left\{2\lambda_n^{2-\alpha}(2+t_0)^{\alpha}, \left(\frac{2+t_0}{t_0}\right)^{\alpha}\right\}.$$

\end{proof}

\begin{rem}
In Theorem \ref{qc} the hypothesis of $f$ being a quasiconformal map cannot be replaced with an analytic function. To see this, we consider the analytic function $g:\mathbb{B}^2\rightarrow \mathbb{B}^2\setminus\{0\}=g\mathbb{B}^2$ with $g(z)=\exp\left(\frac{z+1}{z-1}\right)$.
Let $r_k=\frac{k-1}{k+1}\,(k=1,2,3\cdots)$. Then as $k\rightarrow \infty$,

$$s_{\mathbb{B}^2}(r_k,r_{k+1})=\frac{1}{3+2k}\rightarrow 0\,,$$
while
$$s_{\mathbb{B}^2\setminus\{0\}}(g(r_k),g(r_{k+1}))=\frac{e-1}{e+1}\,.$$

\end{rem}

\begin{lem}\label{lem3.6}
(1) Let the three points $x\,,y\,,z\in\Hn$ be on the line orthogonal to $\partial\Hn$ and $x_n<y_n<z_n$. Then there holds
$$v_{\mathbb{H}^n}(x,y)<v_{\mathbb{H}^n}(x,z)\,.$$
(2) Let $\lambda\in (0,1)$ and $e_n=(0, 0, \cdots,1)\in\Rn$. Let $x\in\mathbb{H}^n$ and $y\in S^{n-1}(x,\lambda x_n)$. Then for $y'=x+\lambda x_ne_n$, there holds
$$v_{\mathbb{H}^n}(x,y)\geq v_{\mathbb{H}^n}(x,y')=\arctan\frac{\lambda}{2\sqrt{1+\lambda}}> \frac{\lambda}{4}\,.$$
\end{lem}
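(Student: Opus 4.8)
\emph{Part (1).} The plan is to reduce everything to the exact formula \eqref{gen2}. The three points lie on a common line orthogonal to $\partial\Hn$, that is, on a hyperbolic geodesic, so \eqref{gen2} applies to both pairs $(x,y)$ and $(x,z)$; moreover, by \eqref{cro} one has the identity $\sh\frac{\rho_{\Hn}(x,w)}{2}=\frac{|x-w|}{2\sqrt{x_nw_n}}$, so that \eqref{gen2} reads $v_{\Hn}(x,w)=\arctan\bigl(\sh\frac{\rho_{\Hn}(x,w)}{2}\bigr)$ for $w\in\{y,z\}$. Since $x_n<y_n<z_n$ places $y$ on the geodesic segment between $x$ and $z$, additivity of hyperbolic length gives $\rho_{\Hn}(x,z)=\rho_{\Hn}(x,y)+\rho_{\Hn}(y,z)>\rho_{\Hn}(x,y)$, and the strict monotonicity of $t\mapsto\arctan(\sh(t/2))$ finishes the proof. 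One may equally avoid the hyperbolic distance and simply check that $t\mapsto(t-x_n)/\sqrt{t}$ is strictly increasing on $[x_n,\infty)$.

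\emph{Part (2).} I would first evaluate the right-hand side. As $y'$ lies on the vertical line through $x$ with $y'_n=(1+\lambda)x_n$ and $|x-y'|=\lambda x_n$, formula \eqref{gen2} gives at once
\[
v_{\Hn}(x,y')=\arctan\frac{\lambda x_n}{2\sqrt{x_n\,(1+\lambda)x_n}}=\arctan\frac{\lambda}{2\sqrt{1+\lambda}}\,.
\]
To prove $v_{\Hn}(x,y)\ge v_{\Hn}(x,y')$ for an arbitrary $y\in S^{n-1}(x,\lambda x_n)$, the idea is to combine the general lower bound for the visual angle with the observation that $y'$ is precisely the point of maximal height on this sphere. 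Using the half-space analogue of the left-hand inequality in \eqref{gen1} (see \cite[Theorem 1.1]{klvw}), namely $v_{\Hn}(x,y)\ge\arctan\bigl(\sh\frac{\rho_{\Hn}(x,y)}{2}\bigr)$, together with the identity $\sh\frac{\rho_{\Hn}(x,y)}{2}=\frac{|x-y|}{2\sqrt{x_ny_n}}$ from \eqref{cro} and $|x-y|=\lambda x_n$, I obtain
\[
v_{\Hn}(x,y)\ge\arctan\frac{\lambda x_n}{2\sqrt{x_ny_n}}=\arctan\Bigl(\frac{\lambda}{2}\sqrt{\frac{x_n}{y_n}}\Bigr)\,.
\]
Every $y$ on the sphere satisfies $y_n\le x_n+\lambda x_n=(1+\lambda)x_n$, with equality exactly for $y=y'$; since $\arctan$ and $t\mapsto\sqrt{x_n/t}$ are monotone, the last expression is at least $\arctan\frac{\lambda}{2\sqrt{1+\lambda}}=v_{\Hn}(x,y')$, as wanted.

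For the final estimate $\arctan\frac{\lambda}{2\sqrt{1+\lambda}}>\frac{\lambda}{4}$, I would apply the lower bound in \eqref{ineq:th}, $\arctan t\ge\frac{\pi}{4}t$, which is legitimate because $\frac{\lambda}{2\sqrt{1+\lambda}}<\frac12<1$ for $\lambda\in(0,1)$. This reduces the claim to $\frac{\pi}{8\sqrt{1+\lambda}}>\frac14$, i.e.\ to $\pi^2>4(1+\lambda)$, which holds throughout $(0,1)$ since $4(1+\lambda)<8<\pi^2$.

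The one genuine obstacle is the middle inequality $v_{\Hn}(x,y)\ge v_{\Hn}(x,y')$ for non-vertical $y$. The direct approach, writing down the extremal boundary point, i.e.\ the smallest circle through $x$ and $y$ tangent to $\partial\Hn$, and comparing its radius with the one for $y'$, is feasible but leads to an awkward quadratic in the tangency parameter. Routing instead through the hyperbolic metric sidesteps this entirely: the lower bound $v_{\Hn}\ge\arctan(\sh(\rho_{\Hn}/2))$ is attained with equality at the vertical point $y'$, while maximizing $y_n$ over the sphere minimizes $\rho_{\Hn}(x,\cdot)$, so the bound is exactly strong enough to identify $y'$ as the minimizer.
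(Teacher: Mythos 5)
Your proof is correct, but for part (2) it takes a genuinely different route from the paper's. For part (1) the two arguments are essentially the same: both reduce to the explicit formula \eqref{gen2}, and the paper simply checks that $t\mapsto\arctan\frac{t-a}{2\sqrt{ta}}$ is increasing on $(a,\infty)$ --- which is exactly your alternative remark, so your hyperbolic-additivity phrasing is just a repackaging of that monotonicity. The real divergence is the inequality $v_{\Hn}(x,y)\ge v_{\Hn}(x,y')$ in part (2). The paper argues geometrically: the radius of the circle through $x,y$ tangent to $\partial\Hn$ is a decreasing function of the angle $\theta=\measuredangle(y',x,y)$, and since the chord $|x-y|=\lambda x_n$ is fixed, the visual angle grows as that radius shrinks, so it is minimized at $\theta=0$, i.e.\ at $y=y'$. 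You instead invoke the half-space case of \cite[Theorem 1.1]{klvw}, namely $v_{\Hn}(x,y)\ge\arctan\bigl(\sh\frac{\rho_{\Hn}(x,y)}{2}\bigr)$, note that this bound becomes an equality at $y'$ by \eqref{gen2}, and observe that $y'$ maximizes $y_n$ over the sphere and hence minimizes $\sh\frac{\rho_{\Hn}(x,\cdot)}{2}=\frac{\lambda x_n}{2\sqrt{x_n y_n}}$ there; chaining these gives the claim. This buys you something the paper leaves informal: its ``by elementary geometry it is clear'' step is asserted rather than verified, whereas your chain is complete --- at the price of relying on the (nontrivial) KLVW theorem, which the present paper quotes only in its unit-ball form \eqref{gen1}, although Theorem 1.1 of that reference does indeed cover $\Hn$. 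Your closing estimate $\arctan\frac{\lambda}{2\sqrt{1+\lambda}}>\frac{\lambda}{4}$, via \eqref{ineq:th} and $\pi^2>4(1+\lambda)$, coincides with the paper's own computation.
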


\begin{proof}
(1) It is easy to see that the function $t\mapsto\arctan\frac{t-a}{2\sqrt{ta}}=\arctan\frac{1-a/t}{2\sqrt{a/t}}$ is increasing on $(a,\infty)$ for $a>0$. By \eqref{gen2},
$$v_{\Hn}(x,y)=\arctan\frac{y_n-x_n}{2\sqrt{x_ny_n}}\,.$$
Since $y_n<z_n$, we have that
$$v_{\mathbb{H}^n}(x,y)<v_{\mathbb{H}^n}(x,z)\,.$$

(2) By elementary geometry it is clear that the radius  of the circle through $x, y$ and tangent to $\partial{\mathbb{H}^n}$ is a decreasing function of $\theta=\measuredangle(y',x,y)\in [0,\pi]$. Hence
$$v_{\mathbb{H}^n}(x,y)\geq v_{\mathbb{H}^n}(x,y')\,.$$
By \eqref{gen2} and \eqref{ineq:th},
$$v_{\mathbb{H}^n}(x,y')=\arctan\frac{\lambda}{2\sqrt{1+\lambda}}
\ge\frac{\lambda}{4}\frac{\pi}{2\sqrt{1+\lambda}}> \frac{\lambda}{4}\,.$$
\end{proof}

\begin{thm}\label{qcv}
Let $D, D'$ be two proper subdomains of $\Rn$ such that $D$ is convex.  Let $f:D\rightarrow D'=fD$ be a $K$-quasiconformal mapping. Then for all $x\,,y\in D$,
$$v_{D'}(f(x),f(y))\leq C_2 v_{D}(x,y)^{\alpha},\, \alpha=K^{1/(1-n)},$$
where
$$C_2=\max \left\{2^{3+2\alpha}\lambda_n^{2-\alpha}, \pi\left(\frac{4}{t_1}\right)^{\alpha}\right\}\,,$$
and $t_1$ is as in Lemma \ref{pro6.2} (2).
\end{thm}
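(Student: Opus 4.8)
The plan is to follow the same two-case structure as the proof of Theorem~\ref{qc}, inserting the comparison inequalities between $v$ and $s$ at both ends. In the (not necessarily convex) target $D'$ I would pass from $s$ to $v$ through the universal bound $v_{D'}(f(x),f(y))\le\pi s_{D'}(f(x),f(y))$ of Lemma~\ref{2.12}, while in the convex source $D$ I would bound $v_D$ \emph{from below}, using convexity, either by $|x-y|/(4\,d(x,\partial D))$ or by $s_D$. As before one may assume $x\neq y$, and split according to whether $|x-y|\le t_1\,d(x,\partial D)$ or $|x-y|>t_1\,d(x,\partial D)$, where $t_1$ is from Lemma~\ref{pro6.2}(2). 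Note that convexity of $D$ guarantees that $\partial D$, and hence $\partial D'=\partial(fD)$, is connected, so the modulus-metric machinery of Theorem~\ref{qc} is available verbatim.

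In the first case, with $\lambda:=|x-y|/d(x,\partial D)\le t_1<1$, I would run the modulus-metric chain exactly as in Theorem~\ref{qc} (Lemmas~\ref{lem:8.86avv} and~\ref{exs:8.85avv} together with the $K$-bilipschitz property of $f$ in the $\mu$-metric) to obtain $s_{D'}(f(x),f(y))\le\Theta_{K,n}(\lambda)$, and then Lemma~\ref{pro6.2}(1) gives $s_{D'}(f(x),f(y))\le 2\lambda_n^{2-\alpha}\lambda^{\alpha}$. Applying Lemma~\ref{2.12} yields $v_{D'}(f(x),f(y))\le 2\pi\lambda_n^{2-\alpha}\lambda^{\alpha}$. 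For the matching lower bound on the source I would use convexity: let $x_0\in\partial D$ realize $d(x,\partial D)$ and let $H\supset D$ be the supporting half space at $x_0$; since the supporting hyperplane is orthogonal to $[x,x_0]$ we have $d(x,\partial H)=d(x,\partial D)$, so by domain monotonicity and Lemma~\ref{lem3.6}(2) (with this $\lambda<1$),
$$v_D(x,y)\ge v_H(x,y)\ge\frac{\lambda}{4}=\frac{|x-y|}{4\,d(x,\partial D)}.$$
Substituting $\lambda\le 4v_D(x,y)$ and using $2\pi<8$ then gives $v_{D'}(f(x),f(y))\le 2^{3+2\alpha}\lambda_n^{2-\alpha}v_D(x,y)^{\alpha}$.

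In the second case, $|x-y|>t_1\,d(x,\partial D)$, the target estimate is trivial, $v_{D'}(f(x),f(y))\le\pi$. For the source I would discard the modulus machinery and instead invoke the convex comparison $s_D\le v_D$ of Lemma~\ref{2.15} together with the elementary bound $s_D(x,y)\ge|x-y|/(2\min\{d(x,\partial D),d(y,\partial D)\}+|x-y|)\ge t_1/(2+t_1)$. Hence $v_D(x,y)\ge t_1/(2+t_1)$, and since $(2+t_1)/t_1<4/t_1$ this gives $v_{D'}(f(x),f(y))\le\pi\le\pi(4/t_1)^{\alpha}v_D(x,y)^{\alpha}$. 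Taking the maximum of the two constants produces $C_2$.

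The routine part is the modulus-metric chain, which is copied from Theorem~\ref{qc}; the only genuinely new ingredient, and the step I expect to require the most care, is the lower bound $v_D(x,y)\ge|x-y|/(4\,d(x,\partial D))$ in the convex source for small separations. This is where convexity is essential and cannot be weakened to mere connectedness of the boundary: the argument reduces $v_D$ to the half-space quantity $v_H$ via a supporting hyperplane, and then relies on the explicit half-space estimate of Lemma~\ref{lem3.6}(2). One must check carefully that the supporting hyperplane at the nearest boundary point is orthogonal to $[x,x_0]$, so that $H$ satisfies $d(x,\partial H)=d(x,\partial D)$ and $y$ lands on the sphere $S^{n-1}(x,\lambda\,d(x,\partial H))$ with $\lambda<1$, exactly as Lemma~\ref{lem3.6}(2) requires.
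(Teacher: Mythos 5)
Your proof is correct and reproduces the paper's constant, but at two pivotal steps you substitute different key lemmas, so the routes are genuinely distinct even though the skeleton (split at $t_1$, modulus-metric chain, supporting half space plus Lemma \ref{lem3.6}(2)) coincides. In the first case the paper does not pass through $s_{D'}$ at all: it uses Lemma \ref{pro6.2}(2) to force $\Theta_{K,n}\le 1/2$, so that $f(y)$ lies in the half-radius ball around $f(x)$, and then runs the chain $v_{D'}\le v_{\mathbb{B}^n}\le\rho_{\mathbb{B}^n}\le 2j_{\mathbb{B}^n}\le 4\,|f(x)-f(y)|/d(f(x),\partial D')$ via \eqref{vrhoj}, getting the factor $8$; you instead combine $s_{D'}(f(x),f(y))\le |f(x)-f(y)|/d(f(x),\partial D')$ with the universal bound $v_{D'}\le\pi s_{D'}$ of Lemma \ref{2.12}, getting $2\pi<8$. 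This is slightly sharper and, notably, makes Lemma \ref{pro6.2}(2) superfluous: your argument would prove the theorem with $t_0$ in place of $t_1$ (hence a smaller constant); you keep $t_1$ only to match the stated $C_2$. In the second case the paper again appeals to Lemma \ref{lem3.6} to get $v_D\ge v_H\ge t_1/4$, which strictly speaking needs an extra monotonicity step because Lemma \ref{lem3.6}(2) is stated only for $\lambda\in(0,1)$ while here $|x-y|$ may exceed $d(x,\partial D)$; your replacement — convexity via Lemma \ref{2.15} ($s_D\le v_D$) together with the elementary bound $s_D(x,y)\ge t_1/(2+t_1)$ — sidesteps that subtlety entirely. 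In short, the paper's proof is self-contained within the Section 4 machinery, whereas yours leans on the Section 2 comparison results (which the paper proves but never uses here), and in exchange is a bit shorter, a bit tighter in its constants, and avoids the one informal spot in the published argument.
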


\begin{proof}
The result is trivial for $x=y$. Therefore we only need to prove the theorem for $x\neq y$.

We first consider the case $|x-y|\le t_1 d(x,\partial D)$ for $x\in D$.
The boundary $\partial D$ is connected because $D$ is convex. By the proof of Theorem \ref{qc} and Lemma \ref{pro6.2}~(2), we have
$$\frac{|f(x)-f(y)|}{d(f(x),\partial D')}\leq \Theta_{K,n}\left(\frac{|x-y|}{d(x,\partial D)}\right)\leq \frac{1}{2}\,,$$
and hence $f(y)\in \mathbb{B}^n(f(x),d(f(x),\partial D')/2)$.

Without loss of generality, we may assume that $f(x)=0$ and $d(f(x),\partial D')=1$.  Then by \eqref{vrhoj}, we have
$$v_{D'}(f(x),f(y))\leq v_{\mathbb{B}^n}(f(x),f(y))\le\rho_{\mathbb{B}^n}(f(x),f(y))\le 2j_{\mathbb{B}^n}(f(x),f(y)).$$
Because $d(f(y),\partial D')\geq d(f(x),\partial D')/2$ and $\log(1+a)\leq a$ for $a\geq 0$, we have
$$j_{\mathbb{B}^n}(f(x),f(y))\leq 2\frac{|f(x)-f(y)|}{d(f(x),\partial D')}.$$
Therefore, the above three inequalities, combined with Lemma \ref{pro6.2}~(1), yield
$$v_{D'}(f(x),f(y)) \leq  4\Theta_{K,n}\left(\frac{|x-y|}{d(x,\partial D)}\right) \leq  8\lambda_n^{2-\alpha}\left(\frac{|x-y|}{d(x,\partial D)}\right)^{\alpha}\,.$$
Since $y\in S^{n-1}(x,|x-y|)$, then by Lemma \ref{lem3.6} (2), we have
\bequu
v_{D'}(f(x),f(y)) &\leq & 8\lambda_n^{2-\alpha}\left(\frac{|x-y|}{4d(x,\partial D)}\right)^{\alpha}4^{\alpha}\\
&\leq & 2^{3+2\alpha}\lambda_n^{2-\alpha}(v_H(x,y))^{\alpha}\,,
\eequu
where $H$ is the half space which contains $D$ such that $d(x,\partial H)=d(x,\partial{D})$. Therefore, for $|x-y|\le t_1 d(x,\partial D)$ we have
$$v_{D'}(f(x),f(y))\leq 2^{3+2\alpha} \lambda_n^{2-\alpha}(v_{D}(x,y))^{\alpha}.$$

It remains to prove the case $|x-y|>t_1 d(x,\partial D)$ for $x\,,y\in D$.
Again by Lemma \ref{lem3.6}, we have
$$v_{D}(x,y)\ge v_{H}(x,y)\ge \frac{t_1}{4}\,,$$
and hence
$$v_{D'}(f(x),f(y))\leq \pi \leq \pi\left(\frac{4}{t_1}\right)^{\alpha}v_{D}(x,y)^{\alpha}\,.$$

Thus we complete the proof by choosing the constant
$$C_2=\max \left\{2^{3+2\alpha}\lambda_n^{2-\alpha}, \pi\left(\frac{4}{t_1}\right)^{\alpha}\right\}\,.$$
\end{proof}

{\bf Acknowledgements.}
The research of the first author was supported by a grant from CIMO.
The research of the third author was supported by Academy of Finland project no. 268009.
The authors are indebted to Xiaohui Zhang for discussions on section 3, and the referee for valuable suggestions.


\end{document}